\documentclass[a4paper,12pt]{amsart}
\usepackage{amssymb}
\usepackage{ifthen}
 \usepackage[dvips]{graphicx}
\nonstopmode \numberwithin{equation}{section}
\usepackage{amssymb}
\usepackage{ifthen}
\usepackage{subcaption}
\usepackage{graphicx}
\usepackage[margin=1in]{geometry}
\usepackage{cite}
\usepackage{amsmath}
\usepackage[T1]{fontenc} %skandit
\usepackage[utf8]{inputenc}
\usepackage[usenames,dvipsnames]{color}
\usepackage{color}
\usepackage[english]{babel}
\usepackage{fancyhdr}
\usepackage{fancybox}
\usepackage{tikz}
\nonstopmode\numberwithin{equation}{section}
\newtheorem*{thmA}{Theorem A}
\newtheorem*{thmB}{Theorem B}
\newtheorem*{thmC}{Theorem C}
\newtheorem*{thmD}{Theorem D}

\newtheorem*{lemA}{Lemma A}
\newtheorem*{lemB}{Lemma B}
\newtheorem*{lemC}{Lemma C}
\newtheorem*{lemD}{Lemma D}

\theoremstyle{plain}

\newtheorem{conj}{Conjecture}

\theoremstyle{definition}
\newtheorem{defn}{Definition}[section]

\newtheorem{thm}{Theorem}[section]
\newtheorem{prob}{Problem}[section]
\newtheorem{cor}{Corollary}[section]

\newtheorem{prop}{Proposition}[section]
\newtheorem{rem}{Remark}[section]
\newtheorem{lem}{Lemma}[section]

\newcounter{minutes}
\divide\time by 60
\newcounter{hours}
\multiply\time by 60
\addtocounter{minutes}{-\time}
\newcounter {own}
\def\theown {\thesection       .\arabic{own}}

\newenvironment{pf}[1][]{%
 \vskip 3mm
 \noindent
 \ifthenelse{\equal{#1}{}}%
  {{\slshape Proof. }}%
  {{\slshape #1.} }%
 }%
{\qed\bigskip}

\newcounter{alphabet}

\def\be{\begin{equation}}
\def\ee{\end{equation}}

\newcommand{\bee}{\begin{enumerate}}
\newcommand{\eee}{\end{enumerate}}

\newcommand{\blem}{\begin{lem}}
\newcommand{\elem}{\end{lem}}
\newcommand{\bthm}{\begin{thm}}
\newcommand{\ethm}{\end{thm}}
\newcommand{\bcor}{\begin{cor}}
\newcommand{\ecor}{\end{cor}}
\newcommand{\beg}{\begin{examp}}
\newcommand{\eeg}{\end{examp}}
\newcommand{\begs}{\begin{examples}}
\newcommand{\eegs}{\end{examples}}

\newcommand{\bdefn}{\begin{defn}}
\newcommand{\edefn}{\end{defn}}

\newcommand{\bprob}{\begin{prob}}
\newcommand{\eprob}{\end{prob}}
\newcommand{\bei}{\begin{itemize}}
\newcommand{\eei}{\end{itemize}}

\newcommand{\bcon}{\begin{conj}}
\newcommand{\econ}{\end{conj}}
\newcommand{\bcons}{\begin{conjs}}
\newcommand{\econs}{\end{conjs}}
\newcommand{\bprop}{\begin{prop}}
\newcommand{\eprop}{\end{prop}}
\newcommand{\br}{\begin{rem}}
\newcommand{\er}{\end{rem}}
\newcommand{\brs}{\begin{rems}}
\newcommand{\ers}{\end{rems}}
\newcommand{\bo}{\begin{obser}}
\newcommand{\eo}{\end{obser}}
\newcommand{\bos}{\begin{obsers}}
\newcommand{\eos}{\end{obsers}}
\newcommand{\bpf}{\begin{pf}}
\newcommand{\epf}{\end{pf}}
\newcommand{\ba}{\begin{array}}
\newcommand{\ea}{\end{array}}
\newcommand{\beq}{\begin{eqnarray}}
\newcommand{\beqq}{\begin{eqnarray*}}
\newcommand{\eeq}{\end{eqnarray}}
\newcommand{\eeqq}{\end{eqnarray*}}

\begin{document}

\title{Sharp Bohr-Type Inequalities Involving Euler Operator and Area Functionals on $\mathbb{P}\Delta(0;1_n)$}

\author{Molla Basir Ahamed}
\address{Molla Basir Ahamed, Department of Mathematics, Jadavpur University, Kolkata-700032, West Bengal, India.}
\email{mbahamed.math@jadavpuruniversity.in}

\author{Taimur Rahman}
\address{Taimur Rahman, Department of Mathematics, Jadavpur University, Kolkata-700032, West Bengal, India.}
\email{taimurr.math.rs@jadavpuruniversity.in}

\subjclass[{AMS} Subject Classification:]{Primary 32A05, 32A70}
\keywords{Bounded holomorphic functions, Multidimensional Bohr, Polydisk, Radial derivative}

\def\thefootnote{}
\footnotetext{ {\tiny File:~\jobname.tex,
printed: \number\year-\number\month-\number\day,
          \thehours.\ifnum\theminutes<10{0}\fi\theminutes }
} \makeatletter\def\thefootnote{\@arabic\c@footnote}\makeatother

\begin{abstract}
	In this paper, we establish several higher-dimensional generalizations of refined Bohr-type inequalities for bounded holomorphic functions mapping into the unit polydisk $\mathbb{P}\Delta(0;1_n)$ in $\mathbb{C}^n$. First, we formulate multidimensional analogues of sharp Bohr-type inequalities originally established by Liu \emph{et al.} [{\it Bull. Sci. Math.} {\bf 173} (2021) 103054], incorporating both squared coefficient terms and area functional components. Second, we provide improved inequalities for a recent multidimensional extension by Ahamed \emph{et al.} [{\it Complex Anal. Oper. Theory} {\bf 20}(6) (2026), 142] by introducing an analogous term corresponding to the area functional. Finally, we extend a refined Bohr-type inequality involving the term $\vert{}f(z)-a_0\vert{}$ to the setting of several complex variables. All the results are shown to be sharp.
\end{abstract}

\maketitle
\pagestyle{myheadings}
\markboth{M. B. Ahamed and T. Rahman}{Multidimensional Bohr phenomenon on unit polydisc}
\tableofcontents
\section{\bf Introduction}
The classical theorem of Harald Bohr \cite{Bohr-PLMS-1914}, originally examined a century ago, continues to generate intensive research on what is now known as Bohr’s phenomenon. Renewed interest surged in the $1990$s due to successful extensions to holomorphic functions of several complex variables and to more abstract functional analytic settings. In $1997$, Boas and Khavinson \cite{Boas-Khavinson-PAMS-1997} extended the Bohr phenomenon to higher dimensions by determining the $n$-dimensional Bohr radius on the polydisk. This seminal work stimulated significant research interest in Bohr-type questions across diverse mathematical domains.Subsequent investigations have yielded further results on Bohr’s phenomenon for multidimensional power series, leading to major contributions by Aizenberg \cite{Aizenberg-2005,Aizenberg-PAMS-2000,Aizenberg-StudMath-2007}, Aizenberg \textit{et al.} \cite{Aizenberg-Aytuna-Djakov-JMAA-2001,Aizenberg-Elin-Shoikhet-StudMath-2005}, Defant and Frerick \cite{Defant-Frerick-IJM-2006}, and Djakov and Ramanujan \cite{Djakov-Ramanujan-JA-2000}. Kumar and Ponnusamy \cite{Kumar-Ponnusamy-JGA-2026} obtained bounds on the partial derivatives at the origin for pluriharmonic functions mapping the unit ball of Minkowski space into the Euclidean unit ball in $\mathbb{C}^N$. Furthermore, they determined the exact asymptotic values of the mixed and arithmetic Bohr radii for these functions. For comprehensive reviews of the topic, we refer to \cite{Ahamed-Ahammed-MJM-2024,Ahamed-Allu-BMMSS-2022,Ahamed-Roy-PMS-2025,Alkhaleefah-Kayumov-Ponnusamy-PAMS-2019,Defant-Frerick-OrtegaCerda-Ounaies-Seip-AM-2011,Hamada-Honda-Kohr-IM-2009,Liu-Ponnusamy-PAMS-2021,Lata-Singh-PAMS-2022,Paulsen-Popescu-Singh-PLMS-2002,Paulsen-Singh-PAMS-2004,Paulsen-Singh-BLMS-2006,Kumar-Manna-JMAA-2023,Kumar-PAMS-2023} and the monograph by Kresin and Maz’ya \cite{Kresin-Mazya-2007}.
\subsection{\bf Overview of Bohr’s Inequality and Related Concepts}

Let $\mathcal{B}$ denote the class of analytic functions $f(\zeta) = \sum_{k=0}^{\infty} a_k\zeta^k$ mapping the unit disk $\mathbb{D} := \{\zeta \in \mathbb{C} : \vert{}\zeta\vert{} < 1\}$ into itself. Excluding trivial unimodular constants, every function in $\mathcal{B}$ satisfies $\vert{}f(\zeta)\vert{} < 1$ throughout $\mathbb{D}$. In $1914$, while investigating Dirichlet series, Harald Bohr \cite{Bohr-PLMS-1914} established the following foundational result
\begin{thmA}
	If $f \in \mathcal{B}$, then the following inequality holds 
	\begin{align}\label{eq-1.1}
		M_f(r) := \sum_{k=0}^{\infty} \vert{}a_k\vert{} r^k \le 1 \quad \text{for } \vert{}\zeta\vert{} := r \le \frac{1}{3}
	\end{align}
\end{thmA}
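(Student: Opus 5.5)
The plan is the classical Wiener coefficient estimate followed by a one-variable optimization in $\vert{}a_0\vert{}$. Write $a:=\vert{}a_0\vert{}\in[0,1]$. If $a=1$, the maximum modulus principle forces $f$ to be a unimodular constant, so $M_f(r)=1$ and there is nothing to prove. Hence assume $a<1$. Then $\vert{}f\vert{}<1$ on $\mathbb{D}$, and I aim to show $\vert{}a_k\vert{}\le 1-a^2$ for every $k\ge1$.

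To get this coefficient bound, I would first treat $k=1$ via the Schwarz--Pick lemma, which gives $\vert{}f'(0)\vert{}\le 1-\vert{}f(0)\vert{}^2$. For general $k$, fix a primitive $k$-th root of unity $\omega$ and average:
\begin{align*}
g(\zeta^k):=\frac1k\sum_{j=0}^{k-1}f(\omega^j\zeta)=a_0+a_k\zeta^k+a_{2k}\zeta^{2k}+\cdots .
\end{align*}
The function $g(w)=a_0+a_kw+\cdots$ is again holomorphic, maps $\mathbb{D}$ into $\overline{\mathbb{D}}$, and satisfies $g(0)=a_0$. Since $\vert{}a_0\vert{}<1$, the open mapping theorem shows $g$ maps into $\mathbb{D}$. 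Applying Schwarz--Pick to $g$ then gives $\vert{}a_k\vert{}=\vert{}g'(0)\vert{}\le 1-a^2$.

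Summing the geometric series gives
\begin{align*}
M_f(r)\le a+(1-a^2)\frac{r}{1-r}.
\end{align*}
For $r\le 1/3$ we have $r/(1-r)\le 1/2$, so the right-hand side is at most
\begin{align*}
a+\tfrac12(1-a^2)=1-\tfrac12(1-a)^2\le 1,
\end{align*}
which proves \eqref{eq-1.1}.

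The only step that needs real care is the coefficient bound for $k\ge2$, namely the rotation-averaging trick that reduces it to the Schwarz--Pick lemma. The rest is elementary. If sharpness of $1/3$ were also wanted, I would test the functions $\varphi_a(\zeta)=(a-\zeta)/(1-a\zeta)$. These satisfy
\begin{align*}
M_{\varphi_a}(r)=a+(1-a^2)\frac{r}{1-r},
\end{align*}
which exceeds $1$ for any $r>1/3$ once $a$ is close enough to $1$.
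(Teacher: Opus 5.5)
The paper does not prove Theorem A. It quotes it as Bohr's classical result, notes the extremal family $\psi_a$, and records the coefficient estimate $\vert{}a_n\vert{}\le 1-\vert{}a_0\vert{}^2$ on which such arguments rest.

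Your proof is the standard Wiener-type argument built on exactly that estimate, and it is complete and correct. The average $\frac1k\sum_{j}f(\omega^j\zeta)$ equals $g(\zeta^k)$ for a holomorphic self-map $g$ of $\mathbb{D}$ with $g(0)=a_0$ and $g'(0)=a_k$. Schwarz--Pick then gives $\vert{}a_k\vert{}\le 1-a^2$, and the identity $a+\tfrac12(1-a^2)=1-\tfrac12(1-a)^2$ finishes the argument.

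The closest thing in the paper is the proof of Theorem~\ref{thm-2.1} specialised to $n=1$. There Lemma~D with $N=1$ gives $\sum_{k\ge1}\vert{}a_k\vert{}r^k\le(1-\vert{}a_0\vert{}^2)\frac{r}{1-r}$, and this holds even after adding the term $\bigl(\frac{1}{1+\vert{}a_0\vert{}}+\frac{r}{1-r}\bigr)\sum_{k\ge1}\vert{}a_k\vert{}^2r^{2k}$ on the left. Evaluating at $r=1/3$ is then literally your final computation. So the paper's route is stronger, but it rests on a black-box refined lemma, while yours is self-contained and elementary.

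There is one slip in your optional sharpness remark. For $k\ge1$ the coefficients of $\varphi_a$ have modulus $(1-a^2)a^{k-1}$, so $M_{\varphi_a}(r)=a+(1-a^2)\frac{r}{1-ar}$, not $a+(1-a^2)\frac{r}{1-r}$; the latter is only your upper bound. With the correct formula, $M_{\varphi_a}(r)>1$ if and only if $r>1/(1+2a)$, which still shows that $1/3$ is optimal as $a\to1^-$.
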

The radius $B=1/3$ is best possible and is referred to as the \textit{Bohr radius} and the inequality \eqref{eq-1.1} is known as \emph{Bohr inequality} for the class $\mathcal{B}$. Furthermore, for 
\begin{align*}
	\psi_a(\zeta) = \frac{a - \zeta}{1 - a\zeta}= a - (1 - a^2) \sum_{k=1}^{\infty} a^{k-1} \zeta^k, \quad a \in [0,1),\; \zeta\in\mathbb{D}
\end{align*} it follows easily that $M_{\psi_a}(r) > 1$ if, and only if, $r >{1}/{(1 + 2a)}$, and hence the radius $1/3$ is optimal as $a \to 1^-$. Bohr originally proved the inequality \eqref{eq-1.1} for $r \le 1/6$, whereas Wiener, Schur, and Riesz later showed that $1/3$ is the best possible. For other proofs of this theorem, we refer to the articles \cite{Bohr-PLMS-1914,Sidon-MZ-1927,Tomic-MS-1962}. It is worth pointing out that there is no extremal function in $\mathcal{B}$ such that the Bohr radius is precisely ${1/3}$ (see, e.g. \cite{Garcia-Mashreghi-Ross-2018}).\vspace{2mm}

Over the decades, a variety of alternative proofs have established (see, e.g. \cite{Garcia-Mashreghi-Ross-2018, Paulsen-Popescu-Singh-PLMS-2002, Paulsen-Singh-PAMS-2004, Paulsen-Singh-BLMS-2006, Sidon-MZ-1927, Tomic-MS-1962}), connecting Bohr's ideas with functional analysis, number theory, and probability. These techniques have also enabled higher-dimensional extensions of Bohr's result \cite{Aizenberg-2005, Aizenberg-Aytuna-Djakov-JMAA-2001, Aizenberg-PAMS-2000, Boas-Khavinson-PAMS-1997, Djakov-Ramanujan-JA-2000, Jia-Liu-Ponnusamy-AMP-2025}. It is interesting that substituting $\vert{}a_0\vert{}^2$ for $\vert{}a_0\vert{}$ in Bohr's inequality improves the radius from $1/3$ to $1/2$. Moreover, assuming $a_0 = 0$ increases the sharp radius to $1/\sqrt{2}$ (cf. \cite{Kayumov-Ponnusamy-CMFT-2017, Paulsen-Popescu-Singh-PLMS-2002, Ponnusamy-Wirths-CMFT-2020}). These refinements rely on the standard coefficient estimate $\vert{}a_n\vert{} \le 1 - \vert{}a_0\vert{}^2$ ($n \ge 1, f \in \mathcal{B}$). For in depth knowledge in Bohr radius proble, we refer to \cite{Muhanna-Ali-Ponnusamy-2016}. \vspace{1.5mm}

Let $\mathbb{N}$ be the set of positive integers. Rogosinski \cite{Rogosinski-MZ-1923} introduced a related phenomenon: for any function $f(\zeta) = \sum_{n=0}^{\infty} a_n \zeta^n$ satisfying $\vert{}f(\zeta)\vert{} < 1$ on $\mathbb{D}$, its partial sums $S_N(\zeta) = \sum_{n=0}^{N-1} a_n \zeta^n$ satisfy $\vert{}S_N(\zeta)\vert{} \le 1$ on the disk $\vert{}\zeta\vert{} \le R= 1/2$ for every $N \in \mathbb{N}$. The constant $1/2$ is sharp and is known as the Rogosinski radius. It is clear that the \textit{Bohr radius} $B$ and the \textit{Rogosinski radius} $R$ satisfies $B=1/3<1/2=R$. \vspace{2mm}

\subsection{\bf Improved Bohr-Type Inequalities}
Let $f$ be holomorphic in $\mathbb{D}$, and let $\mathbb{D}_r := \{\zeta \in \mathbb{D} : |\zeta| < r\}$ where $0 < r < 1$. The area of the Riemann surface corresponding to the image of $\mathbb{D}_r$ under $f$ is denoted by $S_r$ \cite{Kayumov-Ponnusamy-CRMASP-2018,Ismagilov-Kayumov-Ponnusamy-JMAA-2020}. The quantity $S_r$ is characterized by
\begin{align*}
	\frac{S_r}{\pi} = \frac{1}{\pi} \iint_{\mathbb{D}_r} |f'(\zeta)|^2 \, dx \, dy = \sum_{n=1}^{\infty} n |a_n|^2 r^{2n}.
\end{align*}
Furthermore, $S_r$ reduces to the area of $f(\mathbb{D}_r)$ when $f$ is univalent, while it strictly exceeds this area if $f$ is multivalent.\vspace{1.2mm}

In \cite{Liu-Liu-Ponnusamy-BDSM-2021}, Liu \emph{et al.} established the following improved Bohr inequality for functions in the class $\mathcal{B}$ in terms of the area functional $S_r/\pi$.
\begin{thmB}\cite[Theorem 4]{Liu-Liu-Ponnusamy-BDSM-2021}
	Suppose that $f \in \mathcal{B}$, $f(\zeta) = \sum_{n=0}^{\infty} a_n \zeta^n$, and $S_r$ denotes the Riemann surface of the function $f^{-1}$ defined on the image of the subdisk $|\zeta| < r$ under the mapping $f$. Then
	\begin{align*}
		\sum_{n=0}^{\infty} |a_n| r^n + \left( \frac{1}{1 + |a_0|} + \frac{r}{1 - r} \right) \sum_{n=1}^{\infty} |a_n|^2 r^{2n} + \frac{8}{9} \left( \frac{S_r}{\pi} \right) \le 1
	\end{align*}
	for $|\zeta|=r \le 1/3$, and the constants $1/3$ and $8/9$ cannot be improved. Moreover,
	\begin{align*}
		|a_0|^2 + \sum_{n=1}^{\infty} |a_n| r^n + \left( \frac{1}{1 + |a_0|} + \frac{r}{1 - r} \right) \sum_{n=1}^{\infty} |a_n|^2 r^{2n} + \frac{9}{8} \left( \frac{S_r}{\pi} \right) \le 1
	\end{align*}
	for $|\zeta|=r \le 1/(3 - a)$, and the constant $9/8$ cannot be improved, where $a = |a_0|$.
\end{thmB}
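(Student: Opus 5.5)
The natural route to Theorem B goes through the sharp refined coefficient estimate for self-maps of the disk. For $f\in\mathcal{B}$ with $a=|a_0|<1$ I would use
\begin{align*}
\sum_{n=1}^{\infty}|a_n|r^n+\left(\frac{1}{1+a}+\frac{r}{1-r}\right)\sum_{n=1}^{\infty}|a_n|^2r^{2n}\le (1-a^2)\frac{r}{1-r},
\end{align*}
together with the area estimate
\begin{align*}
\frac{S_r}{\pi}=\sum_{n=1}^\infty n|a_n|^2r^{2n}\le (1-a^2)^2\frac{r^2}{(1-a^2r^2)^2}.
\end{align*}
The first inequality I would take from the known refined Bohr lemma; it is proved via Carlson's inequality applied to $\sum |a_n|^2 r^{2n}\le \ldots$. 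The second comes from subordination. Write $f=\omega\circ$ (a disk automorphism), use $\sum n|a_n|^2r^{2n}$ as the area of $f(\mathbb D_r)$ counted with multiplicity, and bound it by the area of the image of $\mathbb D_r$ under the Möbius map $\varphi_{a}$ composed with the worst-case Schwarz function. Equivalently, use Goluzin's lemma for $|f'(z)|\le (1-|f|^2)/(1-|z|^2)$ combined with the Dirichlet integral comparison in Kayumov--Ponnusamy. Both are standard earlier estimates, so I would quote them rather than reprove them.

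For the first inequality, substitute these bounds. It then suffices to show
\begin{align*}
\Phi(a,r):=a+(1-a^2)\frac{r}{1-r}+\frac{8}{9}(1-a^2)^2\frac{r^2}{(1-a^2r^2)^2}\le 1
\end{align*}
for $r\le 1/3$. Since $\Phi$ is increasing in $r$, it is enough to take $r=1/3$. Then $\Phi(a,1/3)=a+\frac{1-a^2}{2}+\frac{8}{9}\cdot\frac{(1-a^2)^2}{(9-a^2)^2}\cdot 9$. Dividing $1-\Phi$ by $(1-a)^2/2$ leaves the inequality
\begin{align*}
1\ge \frac{16(1+a)^2}{(9-a^2)^2},
\end{align*}
which amounts to $9-a^2\ge 4(1+a)$, i.e. $(1-a)(5+a)\ge0$, and this is true. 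The same reduction for the second inequality gives
\begin{align*}
a^2+(1-a^2)\frac{r}{1-r}+\frac{9}{8}(1-a^2)^2\frac{r^2}{(1-a^2r^2)^2}\le 1.
\end{align*}
Dividing by $1-a^2$, this becomes $\frac{r}{1-r}+\frac98(1-a^2)\frac{r^2}{(1-a^2r^2)^2}\le 1$. I would check it at $r=1/(3-a)$, where $r/(1-r)=1/(2-a)$, by clearing denominators in a polynomial inequality in $a\in[0,1)$. This verification is the main computational obstacle: one must confirm nonnegativity of the resulting polynomial on $[0,1]$, and the constant $9/8$ should appear exactly as the value making it tangent at $a\to1^-$ (or at $a=0$). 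If the direct check fails I would instead use monotonicity in $a$ after substituting $r=1/(3-a)$.

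For sharpness I would take $f=\psi_a$ with $a\to1^-$. Its coefficients give equality in both coefficient bounds: $|a_n|=(1-a^2)a^{n-1}$, and $S_r/\pi=(1-a^2)^2r^2/(1-a^2r^2)^2$ exactly. For $r>1/3$ (respectively $r>1/(3-a)$) the left side is $1+(1-a)\cdot(\text{positive})+O((1-a)^2)$, so it exceeds $1$ as $a\to1$. Replacing $8/9$ by any $\lambda>8/9$ makes the second-order term in $(1-a)$ positive at $r=1/3$, since that term is exactly $(1-a)^2(\lambda\cdot 16(1+a)^2/(9-a^2)^2\cdot\frac98-\ldots)$, which vanishes at $a=1$ only when $\lambda=8/9$. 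The same expansion argument handles $9/8$.
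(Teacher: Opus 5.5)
This is essentially the paper's route. The paper never proves Theorem B directly; it obtains it as the case $n=1$ of Theorem~\ref{thm-2.1}. There, your two quoted estimates are exactly Lemma~D with $N=1$ and Lemma~C(a). These are followed by monotonicity in $r$, evaluation at $r=1/3$ resp.\ $r=1/(3-|a_0|)$, a factorization, and the extremal $f_a(z)=\psi_a(-z)$ with $a\to1^-$. Your first part is complete. Your step $9-a^2\ge4(1+a)$ is the square-root form of the paper's factorization $(9-a^2)^2-16(1+a)^2=(1-a)(5+a)(13+4a-a^2)$. Your sharpness argument for $1/3$ and $8/9$ is the paper's.

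The one step you left open closes directly, with no fallback needed. At $r=1/(3-a)$ one has $r/(1-r)=1/(2-a)$ and $r^2/(1-a^2r^2)^2=(3-a)^2/\bigl(9(3-2a)^2\bigr)$. So, with a general constant $\lambda$ in place of $9/8$,
\begin{align*}
1-\Bigl[a^2+\frac{1-a^2}{2-a}+\lambda\frac{(1-a^2)^2(3-a)^2}{9(3-2a)^2}\Bigr]
=\frac{(1-a)^2(1+a)}{9(2-a)(3-2a)^2}\Bigl[9(3-2a)^2-\lambda(1+a)(2-a)(3-a)^2\Bigr].
\end{align*}
For $\lambda=9/8$ the bracket equals $\frac98(1-a)(54-39a+6a^2-a^3)$. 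The cubic has derivative $-3\bigl((a-2)^2+9\bigr)<0$, so it is at least $20$ on $[0,1]$. The bracket degenerates only as $a\to1^-$, where it tends to $9-8\lambda$; it does not degenerate at $a=0$. This limit is exactly why $9/8$ is sharp.

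Two smaller corrections.

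First, the area estimate holds only for $r\le1/\sqrt2$. For $f(z)=z^2$ one has $S_r/\pi=2r^4>r^2$ when $r>1/\sqrt2$. So your subordination heuristic, comparing areas counted with multiplicity, is not valid in general. Cite the Kayumov--Ponnusamy lemma with that restriction; this is harmless here, since $1/3$ and $1/(3-a)\le1/2$ both lie below $1/\sqrt2$.

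Second, drop the claim that the left side exceeds $1$ for $r>1/(3-a)$. For $\psi_a$ the first-order coefficient in $(1-a)$ is $(1+a)\bigl(\frac{r}{1-r}-1\bigr)$, which is negative when $r<1/2$. The display above also shows strict slack at $r=1/(3-a)$ for every $a<1$. The theorem only asserts that $9/8$ cannot be improved, not the radius $1/(3-a)$.
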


\begin{thmC}\cite[Theorem 6]{Liu-Liu-Ponnusamy-BDSM-2021}
	Suppose that $f \in \mathcal{B}$ and $f(\zeta) = \sum_{n=0}^{\infty} a_n \zeta^n$. Then
	\begin{align*}
		\sum_{n=0}^{\infty} |a_n| r^n + \left( \frac{1}{1 + |a_0|} + \frac{r}{1 - r} \right) \sum_{n=1}^{\infty} |a_n|^2 r^{2n} + |f(\zeta) - a_0| \le 1
	\end{align*}
	for $|\zeta| = r \le 1/5$ and the number $1/5$ cannot be improved. Moreover,
	\begin{align*}
		|a_0|^2 + \sum_{n=1}^{\infty} |a_n| r^n + \left( \frac{1}{1 + |a_0|} + \frac{r}{1 - r} \right) \sum_{n=1}^{\infty} |a_n|^2 r^{2n} + |f(\zeta) - a_0| \le 1
	\end{align*}
	for $|\zeta| = r \le 1/3$ and the constant $1/3$ cannot be improved.
\end{thmC}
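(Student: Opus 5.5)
The plan is to reduce both inequalities of Theorem C to one-variable estimates in $a=|a_0|\in[0,1)$. For the Taylor coefficients I would use the refined coefficient inequality of Liu \emph{et al.}, valid for $f\in\mathcal{B}$ with $a=|a_0|$ and $0\le r<1$:
\begin{align*}
\sum_{n=1}^{\infty}|a_n|r^n+\left(\frac{1}{1+a}+\frac{r}{1-r}\right)\sum_{n=1}^{\infty}|a_n|^2r^{2n}\le (1-a^2)\frac{r}{1-r}.
\end{align*}
This is the inequality underlying Theorem B, and it follows from the Carlson-type bound $\sum_{n\ge1}|a_n|^2\le 1-a^2$ combined with a Cauchy--Schwarz argument on $g(\zeta)=(f(\zeta)-a_0)/(1-\overline{a_0}f(\zeta))$. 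For the term $|f(\zeta)-a_0|$ I would use the Schwarz--Pick estimate
\begin{align*}
|f(\zeta)-a_0|\le \frac{(1-a^2)r}{1-ar}\le (1-a^2)\frac{r}{1-r},
\end{align*}
which comes from $|g(\zeta)|\le r$ and $f=(g+a_0)/(1+\overline{a_0}g)$.

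For the first inequality, the left-hand side is then at most $a+2(1-a^2)\frac{r}{1-r}$. For $r\le 1/5$ we have $\frac{r}{1-r}\le \frac14$, so the bound is at most $a+\frac{1-a^2}{2}$. This quantity is $\le 1$ because $1-a-\frac{1-a^2}{2}=\frac{(1-a)^2}{2}\ge0$.

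For the second inequality, the left-hand side is at most $a^2+2(1-a^2)\frac{r}{1-r}$. For $r\le1/3$ we have $\frac{r}{1-r}\le\frac12$, so the bound is at most $a^2+(1-a^2)=1$.

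If one wants more slack, the first estimate for $|f(\zeta)-a_0|$ (with denominator $1-ar$) can be kept. It is not needed for these radii.

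Sharpness will be tested with $f=\psi_a$ (composed with a rotation if convenient) at $\zeta=-r$. Then $|a_n|=(1-a^2)a^{n-1}$ and $|f(-r)-a|=\frac{(1-a^2)r}{1-ar}$. For the first inequality, the left-hand side equals
\begin{align*}
a+(1-a^2)\frac{r}{1-ar}+\left(\frac{1}{1+a}+\frac{r}{1-r}\right)\frac{(1-a^2)^2r^2}{1-a^2r^2}+\frac{(1-a^2)r}{1-ar}.
\end{align*}
Write this as $1+(1-a)\Phi(a,r)$ and let $a\to1^-$. The limit of $\Phi$ is $-1+\frac{4r}{1-r}$, which is positive exactly when $r>1/5$; so for each $r>1/5$ the inequality fails for $a$ near $1$. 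For the second inequality the same expansion gives the limit $-2+\frac{4r}{1-r}$, which is positive exactly when $r>1/3$.

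The main obstacle is the sharpness asymptotics: checking that the squared-coefficient term is $O((1-a)^2)$ and therefore does not affect the limit. This is routine, since that term carries the factor $(1-a^2)^2/(1-a^2r^2)$.
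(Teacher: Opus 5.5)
Your proof is correct, and it is organized differently from the paper's. The paper only quotes Theorem C; the comparable argument is its proof of Theorem~\ref{thm-2.3}, which reduces to Theorem C for $n=1$.

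There, $|f(z)-a_0|$ is bounded by the majorant $\sum_{k\ge1}|a_k|r^k$ and then by Lemma C(c), which forces a case split. For $|a_0|\ge r$ one gets $(1-a^2)r/(1-ar)$. Together with Lemma D ($N=1$) this gives exactly your uncoarsened bound $a+(1-a^2)\frac{r}{1-r}+(1-a^2)\frac{r}{1-ar}$, which the paper evaluates at $r=1/5$ as $1-\frac{(1-a)^2(11-a)}{4(5-a)}$. For $|a_0|<r$ it uses the Cauchy--Schwarz bound $r\sqrt{1-a^2}/\sqrt{1-r^2}$ and a separate numerical check.

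Your Schwarz--Pick estimate gives $(1-a^2)r/(1-ar)$ for every $a$, whereas the majorant bound behind it in the paper is only available for $a\ge r$. So the split disappears. After $1/(1-ar)\le 1/(1-r)$, both parts become the one-liners $a+2(1-a^2)\frac{r}{1-r}\le1$ and $a^2+2(1-a^2)\frac{r}{1-r}\le1$. Even Schwarz--Pick is dispensable here: bounding $|f(\zeta)-a_0|$ by the majorant and reusing the same coefficient inequality gives the same final bound.

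The paper's finer expression buys an explicit deficit. Yours buys brevity and uniformity in $a$, and it carries over to the polydisk unchanged, since Schwarz's lemma on the complex line through $z$ gives $|g(z)|\le\|z\|_\infty$. For sharpness you use the paper's extremal function. Your limit $a\to1^-$ handles every $r>1/5$ (resp.\ $r>1/3$) at once, whereas the paper solves $\Phi_5(a,r)=0$ for an explicit threshold on a restricted range of $r$.

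Two corrections. First, the rotation in the sharpness example is not optional. For $\psi_a$ itself at $\zeta=-r$ one has $|\psi_a(-r)-a|=(1-a^2)r/(1+ar)$. Your limits would then become $-1+4r/(1-r^2)$ and $-2+4r/(1-r^2)$, which only show failure for $r>\sqrt5-2$ and $r>\sqrt2-1$. You need $f(\zeta)=\psi_a(-\zeta)=(a+\zeta)/(1+a\zeta)$ at $\zeta=-r$ (equivalently $\psi_a$ at $\zeta=r$), which is the paper's $f_a$ with $n=1$.

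Second, the coefficient inequality you quote is the case $n=1$, $N=1$ of Lemma D; it is true and citable. However, it is not a consequence of $\sum_{n\ge1}|a_n|^2\le 1-a^2$ and Cauchy--Schwarz: for $a_0=0$ and $r$ near $1$ there are sequences obeying that $\ell^2$ bound which violate it. So the justification you sketch does not work as stated. The standard proof is coefficientwise. The coefficient of $r^m$ on the left is $|a_m|+\sum_{k=1}^{\lfloor (m-1)/2\rfloor}|a_k|^2$, plus $|a_{m/2}|^2/(1+a)$ when $m$ is even. Carlson's inequalities $|a_{2k+1}|\le 1-\sum_{j=0}^{k}|a_j|^2$ and $|a_{2k}|\le 1-\sum_{j=0}^{k-1}|a_j|^2-|a_k|^2/(1+|a_0|)$ bound this by $1-a^2$.
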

Inspired by the recent findings of Ahamed \emph{et al.} \cite{Ahamed-Majumder-Sarkar-CAOT-2026}, it is natural to pose the following question.
\begin{prob}\label{pb-1.1}
	Is it possible to establish multidimensional versions of Theorem B and C?
\end{prob}
In this paper, one of our main objectives is to give an affirmative answer to Problem~\ref{pb-1.1}.
\subsection{\bf Basic notations in several complex variables} Let $z = (z_1, \dots, z_n)$ and $w = (w_1, \dots, w_n)$ be vectors in $\mathbb{C}^n$. The standard Hermitian inner product on $\mathbb{C}^n$ is given by $$\langle z, w \rangle = \sum_{j=1}^n z_j \bar{w}_j,$$ which induces the Euclidean norm $\Vert{}z\Vert{} = \sqrt{\langle z, z \rangle}$. The modulus of a single complex component is written as $\vert{}z_1\vert{}$, whereas the uniform norm on $\mathbb{C}^n$ is defined as $\Vert{}z\Vert{}_\infty = \max_{1 \le i \le n} \vert{}z_i\vert{}$. An open polydisk (alternatively termed an open polycylinder) centered at $a = (a_1, \dots, a_n) \in \mathbb{C}^n$ with polyradius $r = (r_1, \dots, r_n) \in \mathbb{R}^n(r_i>0)$ is the Cartesian product of $n$ one-dimensional open disks:
\begin{align*}
	\mathbb{P}\Delta(a; r) = \prod_{j=1}^n \Delta(a_j; r_j) = \left\{ z \in \mathbb{C}^n : \vert{}z_i - a_i\vert{} < r_i \text{ for all } i = 1, \dots, n \right\}.
\end{align*}
In particular, the standard unit polydisk centered at the origin is denoted by
\begin{align*}
	\mathbb{P}\Delta(\mathbf{0}; \mathbf{1}) =\mathbb{P}\Delta(\mathbf{0}; \mathbf{1}_n)= \prod_{j=1}^n \Delta(0_j; 1_j).
\end{align*}

The closure of $\mathbb{P}\Delta(a; r)$, denoted by $\overline{\mathbb{P}\Delta(a; r)}$, constitutes the closed polydisk. Let $C_k(a_k; r_k) = \partial \Delta(a_k; r_k)$ represent the boundary circle of radius $r_k$ in the $z_k$-plane, parametrized conventionally as $\theta_k \mapsto a_k + r_k e^{i\theta_k}$ for $\theta_k \in [0, 2\pi]$. The Cartesian product of these coordinate circles,
\begin{align*}
	C^n(a; r) := C_1(a_1; r_1) \times \dots \times C_n(a_n; r_n)
\end{align*}
is referred to as the determining set of the polydisk $\mathbb{P}\Delta(a; r)$.\vspace{2mm}

An $n$-dimensional multi-index $\alpha = (\alpha_1, \dots, \alpha_n)$ is an ordered tuple of non-negative integers $\alpha_j \ge 0$. The degree of $\alpha$ is given by $\vert{}\alpha\vert{} = \sum_{j=1}^n \alpha_j$, and its factorial is defined as $\alpha! = \prod_{j=1}^n \alpha_j!$. For any point $z \in \mathbb{C}^n$, we define the monomial powers $z^\alpha = \prod_{j=1}^n z_j^{\alpha_j}$ and $\vert{}z\vert{}^\alpha = \prod_{j=1}^n \vert{}z_j\vert{}^{\alpha_j}$. Let $f(z)$ be holomorphic in a domain $\Omega \subset \mathbb{C}^n$ containing the point $c \in \Omega$. Within any polydisk $\mathbb{P}\Delta(c; r) \subset \Omega$ centered at $c$, $f(z)$ can be represented as an absolutely convergent multi-variable Taylor series:
\begin{align*}
	f(z) = \sum_{\alpha_1,\dots,\alpha_n=0}^{\infty} a_\alpha (z_1 - c_1)^{\alpha_1} \cdots (z_n - c_n)^{\alpha_n} = \sum_{\vert{}\alpha\vert{}=0}^{\infty} a_\alpha (z - c)^\alpha = \sum_{k=0}^{\infty} P_k(z - c),
\end{align*}
where each $P_k(z - c) = \sum_{\vert{}\alpha\vert{}=k} a_\alpha (z - c)^\alpha$ denotes a homogeneous polynomial of degree $k$.\vspace{2mm}

 For a function $f$ holomorphic in a domain $\Omega \subset \mathbb{C}^n$ containing the origin, the Euler operator (or radial derivative operator) $D$ is defined by
 \begin{align*}
 	Df(z) := \sum_{k=1}^n z_k \frac{\partial f(z)}{\partial z_k}, \quad z = (z_1, \dots, z_n) \in \Omega.
 \end{align*}
 This linear differential operator plays a fundamental role in the theory of several complex variables, particularly in connection with homogeneous functions and the study of starlike mappings in $\mathbb{C}^n$. In the literature, $Df(z)$ is also frequently referred to as the radial derivative or total derivative of $f$ at $z$.\vspace{2mm}

Recently, Ahamed \emph{et al.} \cite{Ahamed-Majumder-Sarkar-CAOT-2026} proved the following sharp multidimensional results involving radial derivative on the polydisk.
\begin{thmD}\cite[Theorem 2.3]{Ahamed-Majumder-Sarkar-CAOT-2026}
	Let $f(z)=\sum_{|\alpha|=0}^{\infty}a_{\alpha}z^{\alpha}$ be a holomorphic function in the polydisk $\mathbb{P}\Delta(0;1_n)$ such that $|f(z)|\leq 1$ for all \(z\in\mathbb{P}\Delta(0;1/n)\). Suppose $z=(z_{1},\ldots,z_{n})\in\mathbb{P}\Delta(0;1/n)$ and $r=(r_1,r_2,\ldots,r_n)$ such that \(\mathbf{r}=\|z\|_{\infty}\). Then
	\begin{align*}
	|f(z)|+|Df(z)|+\sum_{k=2}^{\infty}\sum_{|\alpha|=k}|a_{\alpha}|r^{\alpha} +\left(\frac{1}{1+|a_{0}|}+\frac{\mathbf{r}}{1-\mathbf{r}}\right)\sum_{k=1}^{\infty}\sum_{|\alpha|=k}|a_{\alpha}|^{2}r^{2\alpha}\leq 1
	\end{align*}
	for $n\mathbf{r}\leq {(\sqrt{17}-3)}/{4}$ and the constant $	{(\sqrt{17}-3)}/{4}$ is best possible. Moreover,
	
	\begin{align*}
		|f(z)|^2+|Df(z)|+\sum_{k=2}^{\infty}\sum_{|\alpha|=k}|a_{\alpha}|r^{\alpha} +\left(\frac{1}{1+|a_{0}|}+\frac{\mathbf{r}}{1-\mathbf{r}}\right)\sum_{k=1}^{\infty}\sum_{|\alpha|=k}|a_{\alpha}|^{2}r^{2\alpha}\leq 1
	\end{align*}
	for $\mathbf{r}\leq r_0$, where $nr_0\approx0.385795$ is the unique positive root of the equation 
	\begin{align*}
		1-2\mathbf{r}-(n\mathbf{r})^2-(n\mathbf{r})^3-(n\mathbf{r})^4=0
	\end{align*}
	and $nr_0$ is best possible.
\end{thmD}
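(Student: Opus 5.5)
The plan is to reduce to a one-variable problem by restricting $f$ to a complex line through $z$, apply the sharp one-variable estimates there, and then optimize over $a=|a_0|$.

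\textbf{Reduction.} Fix $z\in\mathbb{P}\Delta(0;1/n)$ with $\mathbf{r}=\|z\|_\infty$, and put $\rho:=n\mathbf{r}<1$. Choose a unit direction $w$ with $z=\mathbf{r}w$ and $\|w\|_\infty= 1$. Define the slice function
\begin{align*}
g(\zeta):=f\Bigl(\frac{\zeta}{n}\,w\Bigr)=\sum_{k=0}^\infty P_k(w)\,n^{-k}\zeta^k=:\sum_{k=0}^\infty b_k\zeta^k .
\end{align*}
For $|\zeta|<1$ the point $\zeta w/n$ lies in $\mathbb{P}\Delta(0;1/n)$, so $g\in\mathcal{B}$ and $b_0=a_0$. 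Moreover $g(\rho)=f(z)$, and since $\sum_k kP_k=Df$, we get $\rho g'(\rho)=Df(z)$.

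\textbf{Bounds on the individual terms.}
\begin{itemize}
\item By Schwarz--Pick, $|f(z)|\le (\rho+a)/(1+a\rho)$ and $|Df(z)|\le \rho(1-|g(\rho)|^2)/(1-\rho^2)$.
\item For the coefficient sums I would show that $|a_\alpha|\le(1-a^2)n^{|\alpha|}$. The idea is to apply Wiener's inequality to the one-variable functions obtained by rotating the coordinates so as to isolate the monomial $z^\alpha$ on the scaled torus. Combined with $r^\alpha\le \mathbf{r}^{|\alpha|}$ and the multiset count, this should give $\sum_{|\alpha|=k}|a_\alpha|r^\alpha\le(1-a^2)\rho^k$.
\item The squared sum $\sum_{|\alpha|=k}|a_\alpha|^2r^{2\alpha}$ is handled the same way, or via Parseval on the torus, giving an analogue of $\sum|b_k|^2\rho^{2k}$.
\item Then I invoke the one-variable lemma underlying Theorems B and C:
\begin{align*}
\sum_{k\ge1}|b_k|\rho^k+\Bigl(\frac{1}{1+a}+\frac{\rho}{1-\rho}\Bigr)\sum_{k\ge1}|b_k|^2\rho^{2k}\le(1-a^2)\frac{\rho}{1-\rho}.
\end{align*}
The squared term has exactly this form in $\rho$, because $\mathbf{r}/(1-\mathbf{r})\le\rho/(1-\rho)$.
\end{itemize}

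\textbf{Optimization in $a$.} Summing, the left side is at most
\begin{align*}
\Phi(a,t)=t+\frac{\rho(1-t^2)}{1-\rho^2}+(1-a^2)\Bigl(\frac{\rho}{1-\rho}-\rho\Bigr),\qquad t=|g(\rho)|\le\frac{\rho+a}{1+a\rho}.
\end{align*}
The steps are:
\begin{itemize}
\item Maximize in $t$. The function $\Phi$ is increasing in $t$ on the relevant range when $\rho\le 1/2$, so $t$ can be replaced by its Schwarz--Pick bound.
\item Show that $\Phi\le1$ for all $a\in[0,1)$ iff the limiting inequality as $a\to1^-$ holds. Computing $\partial_a$ near $a=1$, this reduces to $2\rho^2+3\rho-1\le0$, i.e.\ $\rho\le(\sqrt{17}-3)/4$.
\item For the $|f(z)|^2$ version, the same procedure with $t^2$ in place of $t$ leads to the quartic $1-2\mathbf{r}-\rho^2-\rho^3-\rho^4=0$.
\end{itemize}

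\textbf{Sharpness.} Take $f(z)=\psi_a(z_1+\dots+z_n)$ with $a\in[0,1)$; it maps $\mathbb{P}\Delta(0;1/n)$ into $\overline{\mathbb{D}}$. Evaluate at $z=(\mathbf{r},\dots,\mathbf{r})$, so that $\zeta=\rho$. All coefficient sums then become explicit geometric series in $\rho$, and the inequality fails as $a\to1^-$ whenever $\rho$ exceeds the root.

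\textbf{Main obstacle.} The step I expect to be hardest is the passage from the homogeneous blocks to the absolute coefficient sums $\sum_{|\alpha|=k}|a_\alpha|r^\alpha$ with the clean factor $\rho^k$, since absolute values do not commute with the slice restriction. I would first try the monomial-isolation averaging argument. If the constant comes out wrong, I would fall back on a Bohr-type subordination on the torus.
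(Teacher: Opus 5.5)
You handle $|f(z)|$ and $|Df(z)|$ correctly. Schwarz--Pick for the slice $g(\zeta)=f(\zeta w/n)$ gives exactly the bounds $(\rho+a)/(1+a\rho)$ and $\rho(1-|f(z)|^2)/(1-\rho^2)$ that the paper gets from Lemmas A and B. Your limiting conditions $2\rho^2+3\rho-1\le 0$ and $1-2\rho-\rho^2-\rho^3-\rho^4\ge 0$ are also the right ones; the $\mathbf{r}$ in your quartic should be $\rho=n\mathbf{r}$. The ``iff'' holds because, after inserting the Schwarz--Pick bound, $1-\Phi$ is $1-a$ (resp.\ $1-a^2$) times a function decreasing in $a$ for the relevant $\rho$.

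\textbf{The gap: the coefficient block, and as you set it up it is fatal.}
\begin{enumerate}
\item \emph{The arithmetic does not give the claimed bound.} From $|a_\alpha|\le(1-a^2)n^{|\alpha|}$, $r^\alpha\le\mathbf{r}^k$ and the $\binom{n+k-1}{k}$ multi-indices of length $k$, you only get $(1-a^2)\binom{n+k-1}{k}\rho^k$, not $(1-a^2)\rho^k$.
\item \emph{The claimed bound is false under your hypothesis.} With $|f|\le 1$ assumed only on $\mathbb{P}\Delta(0;1/n)$, the bound $\sum_{|\alpha|=k}|a_\alpha|r^\alpha\le(1-a^2)\rho^k$ fails. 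For $n=2$, $f(z)=\sqrt2\,(z_1^2+2z_1z_2-z_2^2)$ satisfies $|f|\le1$ on $\mathbb{P}\Delta(0;1/2)$, because $|1+2e^{it}-e^{2it}|=2\sqrt{1+\sin^2t}$. It has $a_0=0$, yet $\sum_{|\alpha|=2}|a_\alpha|r^\alpha=\sqrt2\,\rho^2$ at $r=(\mathbf{r},\mathbf{r})$.
\item \emph{Theorem D itself fails under that hypothesis.} Take $f(z)=P(nz)$ with $P(w)=n^{-3/2}\sum_{j,k=1}^n e^{2\pi i jk/n}w_jw_k$. Then $P$ has sup norm at most $1$ on the unit polydisk and coefficient $\ell^1$-norm $\sqrt n$. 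At $n\mathbf{r}=\mathbf{r}_*$ the left side of Theorem D exceeds $1$ once $\sqrt n>\mathbf{r}_*^{-2}\approx 12.7$. So no subordination fallback can rescue this step.
\item \emph{The single slice points the wrong way.} The one-variable lemma applied to $g$ controls $|b_k|\rho^k=|P_k(z)|$. That quantity is bounded \emph{by} $\sum_{|\alpha|=k}|a_\alpha|r^\alpha$; it does not bound it.
\item \emph{The $N=1$ lemma is not enough.} You cannot get $(1-a^2)(\rho/(1-\rho)-\rho)$ from the $N=1$ lemma by dropping the $k=1$ term, since that would need $|b_1|\ge1-a^2$. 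You need the $N=2$ case.
\end{enumerate}

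\textbf{The missing idea and how the paper proceeds.} The factor $n$ in $n\mathbf{r}$ comes from counting multi-indices, $\binom{n+k-1}{k}\le n^k$, together with $|a_\alpha|\le 1-|a_0|^2$. That needs $|f|\le1$ on all of $\mathbb{P}\Delta(0;1_n)$, which is what Lemmas A, B and D assume. In the proof of Theorem~\ref{thm-2.2}, which contains Theorem D, the paper bounds the whole block $\sum_{k\ge2}\sum_{|\alpha|=k}|a_\alpha|r^\alpha+\bigl(\frac{1}{1+|a_0|}+\frac{\mathbf{r}}{1-\mathbf{r}}\bigr)\sum_{k\ge1}\sum_{|\alpha|=k}|a_\alpha|^2r^{2\alpha}$ at once. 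It uses Lemma D with $N=2$ and obtains $(1-|a_0|^2)(n\mathbf{r})^2/(1-n\mathbf{r})$.

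\textbf{How to salvage your slice idea} under that stronger hypothesis. Use all unit-norm slices $g_\theta(\zeta)=f(\zeta r_1e^{i\theta_1}/\mathbf{r},\dots,\zeta r_ne^{i\theta_n}/\mathbf{r})$, with Taylor coefficients $b_k(\theta)$. Apply the one-variable $N=2$ lemma to each at radius $\rho$, then average over $\theta\in[0,2\pi]^n$:
\begin{enumerate}
\item Parseval gives $\sum_{|\alpha|=k}|a_\alpha|^2r^{2\alpha}=\mathbf{r}^{2k}\,\mathrm{avg}_\theta|b_k(\theta)|^2$.
\item The estimate $|a_\alpha|r^\alpha\le\mathbf{r}^k\,\mathrm{avg}_\theta|b_k(\theta)|$ plus the count gives $\sum_{|\alpha|=k}|a_\alpha|r^\alpha\le\rho^k\,\mathrm{avg}_\theta|b_k(\theta)|$.
\end{enumerate}
This leaves a tension inherited from the statement. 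Under the stronger hypothesis, the extremal $f_a$ of \eqref{eq-2.6} is not admissible for $n\ge2$: it has a pole at $z_1+\dots+z_n=-1/a$ inside $\mathbb{P}\Delta(0;1_n)$ when $a>1/n$.

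\textbf{Two small slips.}
\begin{enumerate}
\item Monotonicity of $t\mapsto t+\rho(1-t^2)/(1-\rho^2)$ needs $\rho\le\sqrt2-1$, not $\rho\le1/2$. This is harmless here.
\item With $\psi_a(\zeta)=(a-\zeta)/(1-a\zeta)$ you must evaluate at $z=(-\mathbf{r},\dots,-\mathbf{r})$. At $\zeta=+\rho$ you only get $|f(z)|=|a-\rho|/(1-a\rho)$, and the inequality does not fail for $\rho$ slightly above $\mathbf{r}_*$.
\end{enumerate}
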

Motivated by Theorem D, one can naturally raise the following question.
\begin{prob}\label{pb-1.2}
	Can we establish an improved version of the theorem D?
\end{prob}

In this paper, our aim is to present affirmative answers to the Problems \ref{pb-1.1} and \ref{pb-1.2}. The organization of the paper is as follows: In Section 2, we present our main results. In Section 3, we state several key lemmas which play an essential role in proving the results of this paper. Finally, in Section 4, we present the proofs of our main results.\vspace{1.2mm}

Before stating our main results, we highlight the geometric and analytical significance of the quantities under consideration. For a holomorphic function $f(z) = \sum_{|\alpha|=0}^{\infty} a_{\alpha} z^{\alpha}$ on the unit polydisk $\mathbb{D}^n$, the Euler operator $D f(z)$ acts as the total radial derivative. Observe that the $L^2$-norm of $Df$ over the boundary torus $\mathbb{T}^n_r = \{z \in \mathbb{C}^n : |z_j| = r, \, j=1,\dots,n\}$ yields
\[
\frac{1}{(2\pi)^n} \int_{0}^{2\pi} \dots \int_{0}^{2\pi} \left| Df(r e^{i\theta_1}, \dots, r e^{i\theta_n}) \right|^2 d\theta_1 \dots d\theta_n = \sum_{k=1}^{\infty} k \sum_{|\alpha|=k} |a_{\alpha}|^2 r^{2|\alpha|}.
\]
In our context, the lower-order weighted term 
\[
\mathcal{S}_f(r) := \sum_{k=1}^{\infty} k \sum_{|\alpha|=k} |a_{\alpha}|^2 r^{2\alpha}
\]
serves as a natural multidimensional analogue of the classical area measure $S_r/\pi$ of the Riemann surface $f(\mathbb{D}_r)$ in $\mathbb{C}$. It reflects the Dirichlet energy of $f$ restricted to polydisk slices.\vspace{1.2mm} 

By incorporating both the radial derivative operator $|Df(z)|$ and the area-type functional $\mathcal{S}_f(r)$, the majorant series studied in Theorems \ref{thm-2.1}--\ref{thm-2.3} capture subtle boundary and interior growth properties of holomorphic mappings in $\mathbb{C}^n$. The appearance of the dimensional factor $1/n$ in the sharp radii $1/(3n)$ and $1/(5n)$ reflects the fundamental geometric distortion arising from the supremum norm $\|\cdot\|_\infty$ on polydisks.
\section{\bf Main results}In this section, we present our main results concerning multidimensional Bohr-type inequalities. For $n = 1$, these results reduce to known classical single-variable inequalities. Furthermore, all the obtained Bohr radii are sharp.\vspace{1.2mm}

Let $f(z)=\sum_{|\alpha|=0} a_{\alpha}z^{\alpha}$ be a holomorphic function in the polydisk $\mathbb{P}\Delta(0;1/n)$ such that $|f(z)|\leq 1$ for all $z\in \mathbb{P}\Delta(0;1/n)$ and define $M_f(|z|)=\sum_{|\alpha|=0}^{\infty} |a_{\alpha}| |z|^{\alpha}$.\vspace{1.2mm}

% Lead-in for Theorem 2.1
Our first result establishes a higher-dimensional extension of Theorem B for bounded holomorphic functions on the unit polydisk $\mathbb{P}\Delta(0;1_n)$, incorporating both squared coefficient terms and an area-type functional.
\begin{thm}\label{thm-2.1}
	Let $f(z) = \sum_{|\alpha|=0}^{\infty} a_{\alpha}z^{\alpha}$ be a holomorphic function in the polydisk $\mathbb{P}\Delta(0; 1_n)$ such that $|f(z)| \le 1$ for all $z \in \mathbb{P}\Delta(0; 1/n)$. Suppose $z = (z_1, \dots, z_n) \in \mathbb{P}\Delta(0; 1/n)$ and $r = (r_1, r_2, \dots, r_n)$ such that $\|z\|_{\infty} = \mathbf{r}$. Then
	\begin{align*}
		\mathcal{C}^1_f(r):=\sum_{k=0}^{\infty} \sum_{|\alpha|=k} |a_{\alpha}| r^{\alpha} &+ \left( \frac{1}{1 + |a_0|} + \frac{\mathbf{r}}{1 - \mathbf{r}} \right) \sum_{k=1}^{\infty} \sum_{|\alpha|=k} |a_{\alpha}|^2 r^{2\alpha}\\& + \frac{8}{9} \sum_{k=1}^{\infty} k \sum_{|\alpha|=k} |a_{\alpha}|^2 r^{2\alpha}\leq 1,
	\end{align*}
	for $\mathbf{r}\leq 1/(3n)$ and the numbers $1/(3n)$ and $8/9$ cannot be improved. Moreover, 
	\begin{align*}
		\mathcal{C}^2_f(r):=|a_0|^2+\sum_{k=1}^{\infty} \sum_{|\alpha|=k} |a_{\alpha}| r^{\alpha} +& \left( \frac{1}{1 + |a_0|} + \frac{\mathbf{r}}{1 - \mathbf{r}} \right) \sum_{k=1}^{\infty} \sum_{|\alpha|=k} |a_{\alpha}|^2 r^{2\alpha}\\ +& \frac{9}{8} \sum_{k=1}^{\infty} k \sum_{|\alpha|=k} |a_{\alpha}|^2 r^{2\alpha}\leq 1,
	\end{align*}
	for $\mathbf{r}\leq 1/(n(3-|a_0|))$ and the number $9/8$ cannot be improved.
\end{thm}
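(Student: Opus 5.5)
Our plan is to reduce Theorem~\ref{thm-2.1} to the one-variable Theorem B by slicing along complex lines, and to control the multi-index sums by homogeneous expansions. Fix $z$ with $\|z\|_\infty=\mathbf r$ and, for each $k\ge1$, write $P_k(z)=\sum_{|\alpha|=k}a_\alpha z^\alpha$. The first key step is the standard multidimensional coefficient estimate: for $f$ bounded by $1$ on $\mathbb{P}\Delta(0;1/n)$ (hence on $\mathbb{P}\Delta(0;1_n)$ restricted as in the hypothesis), we aim to show
\begin{align*}
\sum_{|\alpha|=k}|a_\alpha|\,r^{\alpha}\le (n\mathbf r)^k\,(1-|a_0|^2),\qquad k\ge1 .
\end{align*}
We would obtain this, as in the work of Ahamed \emph{et al.}, by applying the Wiener estimate $|b_k|\le 1-|b_0|^2$ to the slice function $g(\zeta)=f(\zeta\, e^{i\theta_1}/n,\dots,\zeta\, e^{i\theta_n}/n)$ on $\mathbb D$ and averaging over the torus in $\theta$ to isolate each $|a_\alpha|$, then using $r^\alpha\le \mathbf r^{k}$ and the count $\sum_{|\alpha|=k}n^{-k}\cdot(\cdots)$ to produce the factor $(n\mathbf r)^k$. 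In the same way the $\ell^2$ sums are compared: $\sum_{|\alpha|=k}|a_\alpha|^2r^{2\alpha}\le \big(\sum_{|\alpha|=k}|a_\alpha|r^\alpha\big)^2$, so everything is controlled by the one-variable quantities $A_k:=\sum_{|\alpha|=k}|a_\alpha|r^\alpha$.

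The second step is to observe that the expression $\mathcal C^1_f(r)$ depends only on $|a_0|$ and the numbers $A_k$ (after the bound $\sum_{|\alpha|=k}|a_\alpha|^2r^{2\alpha}\le A_k^2$), and that the function $h(\zeta)=a_0+\sum_k A_k\,\zeta^k/(n\mathbf r)^k$ behaves, coefficientwise, like a Schur function evaluated at $\rho=n\mathbf r$. Rather than constructing such an $h$, we would directly reproduce the proof of Theorem B with the refined Bombieri-type lemma: for $a=|a_0|$ and $\rho=n\mathbf r\le1/3$, use the sharp inequality $\sum_{k\ge1}|b_k|\rho^k+\big(\frac1{1+a}+\frac{\rho}{1-\rho}\big)\sum_{k\ge1}|b_k|^2\rho^{2k}\le (1-a^2)\frac{\rho}{1-\rho}$ (valid for the slice function $g$), together with $\sum k|b_k|^2\rho^{2k}\le(1-a^2)^2\rho^2/(1-\rho^2)^2$. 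Note $\frac{\mathbf r}{1-\mathbf r}\le\frac{\rho}{1-\rho}$, so the weight in the statement is dominated by the one-variable weight. The resulting inequality reduces to checking a one-variable function $\Phi(a,\rho)=a+(1-a^2)\frac{\rho}{1-\rho}+\frac89\frac{(1-a^2)^2\rho^2}{(1-\rho^2)^2}\le1$ for $\rho\le1/3$, which is monotone in $\rho$ and at $\rho=1/3$ becomes $a+\frac{1-a^2}{2}+\frac{(1-a^2)^2}{8}\le1$, i.e.\ $(1-a)^2(\ldots)\ge0$; the second part is identical with $a^2$ replacing $a$ and $\rho\le 1/(3-a)$, giving the constant $9/8$.

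The main obstacle is the passage from the $n$-variable sums to the slice function: the bound on the $\ell^1$ sum is fine, but the squared-coefficient and area terms must be bounded \emph{jointly} with the $\ell^1$ term by the same slice quantity, and $\sum_{|\alpha|=k}|a_\alpha|^2r^{2\alpha}\le A_k^2$ loses nothing only on a single line. I would handle this by choosing the slice direction adapted to $r$ (namely $\zeta\mapsto(\zeta r_1/\mathbf r\cdots)$ combined with the $1/n$ scaling) so that each homogeneous block collapses to one coefficient $b_k$ with $A_k\le |b_k|\rho^k$ after the torus averaging; if that fails, fall back to bounding each term separately by $(1-a^2)\rho^k$ and $(1-a^2)^2\rho^{2k}$, which still yields exactly $\Phi$. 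Sharpness follows by taking $f(z)=\psi_a(n z_1)$-type or $f(z)=\frac{a-\sum_j z_j}{1-a\sum_j z_j}$ at $z=(\mathbf r,\dots,\mathbf r)$, letting $a\to1^-$ for the radius $1/(3n)$ and expanding to second order in $1-a$ to show $8/9$ and $9/8$ cannot be increased.
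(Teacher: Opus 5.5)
There is a genuine gap, and it sits exactly where you locate the ``main obstacle''. You never obtain \emph{joint} control of the $\ell^1$ term and the weighted squared-coefficient term, i.e.\ the multidimensional refined inequality
\begin{align*}
\sum_{k\ge1}A_k+\Big(\frac{1}{1+|a_0|}+\frac{\mathbf r}{1-\mathbf r}\Big)\sum_{k\ge1}\sum_{|\alpha|=k}|a_\alpha|^2r^{2\alpha}\le\frac{(1-|a_0|^2)\,n\mathbf r}{1-n\mathbf r}.
\end{align*}
In the paper this is Lemma~D with $N=1$. Combined with Lemma~C(a) for the area term, it reduces everything to the one-variable check at $n\mathbf r=1/3$ and $n\mathbf r=1/(3-|a_0|)$, which you also carry out. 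Neither of your routes yields this inequality.

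A single slice through a point $w$ has $k$th coefficient $P_k(w)$, which can vanish by cancellation while $A_k>0$. For your slice through $r/\mathbf r$ and $f=(z_1-z_2)/2$ with $r_1=r_2$, one gets $b_1=0$ but $A_1=\mathbf r$. Nothing in your argument produces one $w$ with $A_k\le|P_k(w)|\rho^k$ for all $k$ simultaneously, and torus averaging gives an average of slices, not a slice.

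The fallback does not yield $\Phi$. The termwise bounds $A_k\le(1-a^2)\rho^k$ already saturate $(1-a^2)\rho/(1-\rho)$ with the $\ell^1$ term alone. Together with $\sum_{|\alpha|=k}|a_\alpha|^2r^{2\alpha}\le(1-a^2)^2\rho^{2k}$ they give $\Phi+\big(\frac{1}{1+a}+\frac{\rho}{1-\rho}\big)\frac{(1-a^2)^2\rho^2}{1-\rho^2}$. At $\rho=1/3$ this equals $1+\frac{1}{16}(1-a)^2(3a-1)(a+3)$, which exceeds $1$ for every $a\in(1/3,1)$. This already happens for $n=1$: bare coefficient bounds cannot absorb the weighted $\ell^2$ term, and one needs the Schur-function structure built into the refined inequality.

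To repair it, either cite Lemma~D as the paper does, or make your averaging idea honest. Apply Theorem~B itself to \emph{every} slice $g_\theta(\zeta)=f(\zeta e^{i\theta_1},\dots,\zeta e^{i\theta_n})$ at radius $\rho=n\mathbf r$, and integrate over $\theta\in[0,2\pi]^n$. Each $g_\theta$ has $g_\theta(0)=a_0$ and $k$th coefficient $P_k(e^{i\theta})$. The three needed comparisons are:
\begin{itemize}
\item with $m_k=(2\pi)^{-n}\int|P_k(e^{i\theta})|\,d\theta$, you have $|a_\alpha|\le m_k$ for $|\alpha|=k$, and $\sum_{|\alpha|=k}r^\alpha\le(r_1+\dots+r_n)^k\le\rho^k$, hence $A_k\le m_k\rho^k$;
\item Parseval gives $\sum_{|\alpha|=k}|a_\alpha|^2r^{2\alpha}\le\rho^{2k}(2\pi)^{-n}\int|P_k(e^{i\theta})|^2\,d\theta$;
\item $\mathbf r/(1-\mathbf r)\le\rho/(1-\rho)$.
\end{itemize}
So $\mathcal C^1_f(r)$ and $\mathcal C^2_f(r)$ are bounded by torus averages of the one-variable functionals of Theorem~B. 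Each of these is at most $1$ for $\rho\le1/3$, resp.\ $\rho\le1/(3-|a_0|)$. What makes the reduction work is linearity of the integral, not a single dominating slice. Your cruder area bound $(1-a^2)^2\rho^2/(1-\rho^2)^2$ is harmless: at $\rho=1/3$ it gives $1-\frac18(1-a)^3(3+a)$.

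Two further cautions, both shared with the paper.
\begin{itemize}
\item Step~1 and the averaging need $|f|\le1$ on all of $\mathbb{P}\Delta(0;1_n)$. Your slice $\zeta\mapsto f(\zeta e^{i\theta}/n)$ only gives $|a_\alpha|\le n^{|\alpha|}(1-|a_0|^2)$. Yet the extremals you and the paper use are bounded only on $\mathbb{P}\Delta(0;1/n)$.
\item Take $f=a+\sum_k c_k(z_1+\dots+z_n)^k$ at $z=(\mathbf r,\dots,\mathbf r)$. Then $\sum_{|\alpha|=k}|a_\alpha|^2r^{2\alpha}=|c_k|^2\mathbf r^{2k}\sum_{|\alpha|=k}(k!/\alpha!)^2<|c_k|^2(n\mathbf r)^{2k}$ when $n\ge2$, and also $\mathbf r/(1-\mathbf r)<\rho/(1-\rho)$. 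So your second-order expansion in $1-a$ will not reproduce the thresholds $8/9$ and $9/8$ for $n\ge2$, with either of your extremals. The paper's sharpness computation identifies these quantities with their one-variable counterparts, so it does not settle that point either.
\end{itemize}
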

% Lead-in for Theorem 2.2
Next, we refine Theorem D by introducing an additional area-type functional alongside the Euler operator $Df(z)$, providing an improved multidimensional Bohr-type inequality.
\begin{thm}\label{thm-2.2}
	Let $f(z)=\sum_{|\alpha|=0}^{\infty}a_{\alpha}z^{\alpha}$ be a holomorphic function in the polydisk $\mathbb{P}\Delta(0;1_n)$ such that $|f(z)|\leq 1$ for all \(z\in\mathbb{P}\Delta(0;1/n)\). Suppose $z=(z_{1},\ldots,z_{n})\in\mathbb{P}\Delta(0;1/n)$ and $r=(r_1,r_2,\ldots,r_n)$ such that \(\mathbf{r}=\|z\|_{\infty}\). Then
	\begin{align*}
		\mathcal{D}(z,\mathbf{r}):=&|f(z)|+|Df(z)|+\sum_{k=2}^{\infty}\sum_{|\alpha|=k}|a_{\alpha}|r^{\alpha} +\left(\frac{1}{1+|a_{0}|}+\frac{\mathbf{r}}{1-\mathbf{r}}\right)\sum_{k=1}^{\infty}\sum_{|\alpha|=k}|a_{\alpha}|^{2}r^{2\alpha}\\&+\frac{1}{4}(1-\mathbf{r}_*)(2+5\mathbf{r}_*+5\mathbf{r}^2_*)\sum_{k=1}^{\infty} k \sum_{|\alpha|=k} |a_{\alpha}|^2 r^{2\alpha}\leq 1
	\end{align*}
	for $n\mathbf{r}\leq\mathbf{r}_*={(\sqrt{17}-3)}/{4}$ and the constants $	\mathbf{r}_*$ and $\frac{1}{4}(1-\mathbf{r}_*)(2+5\mathbf{r}_*+5\mathbf{r}^2_*)$ are best possible. Moreover,
	\begin{align*}
		\mathcal{E}(z,\mathbf{r}):=&|f(z)|^2+|Df(z)|+\sum_{k=2}^{\infty}\sum_{|\alpha|=k}|a_{\alpha}|r^{\alpha} +\left(\frac{1}{1+|a_{0}|}+\frac{\mathbf{r}}{1-\mathbf{r}}\right)\sum_{k=1}^{\infty}\sum_{|\alpha|=k}|a_{\alpha}|^{2}r^{2\alpha}\\&+\mathbf{r}_{**}(1-\mathbf{r}^2_{**})\sum_{k=1}^{\infty} k \sum_{|\alpha|=k} |a_{\alpha}|^2 r^{2\alpha}\leq 1
	\end{align*}
	for $n\mathbf{r}\leq \mathbf{r}_{**}$, where $\mathbf{r}_{**}\approx0.385795$ is the unique positive root of the equation $1-2\mathbf{r}-\mathbf{r}^2-\mathbf{r}^3-\mathbf{r}^4=0$,
	and the constants $\mathbf{r}_{**}$ and $\mathbf{r}_{**}(1-\mathbf{r}^2_{**})$ are best possible.
\end{thm}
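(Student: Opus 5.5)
The plan is to reduce the $n$-dimensional statement to a one-variable inequality in the two parameters $a:=|a_0|\in[0,1)$ and $\rho:=n\mathbf{r}$. Then I would show that the resulting function of $(a,\rho)$ is at most $1$ on the stated range, and finally exhibit an extremal family for sharpness.

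\textbf{Reduction to one variable.} Fix $z$ with $\|z\|_\infty=\mathbf{r}$.
\begin{itemize}
\item The function $\varphi(\zeta):=f(\zeta z/(n\mathbf{r}))$ is holomorphic on $\mathbb{D}$ with $|\varphi|\le 1$, since $\zeta z/(n\mathbf{r})\in\mathbb{P}\Delta(0;1/n)$. Schwarz--Pick applied to $\varphi$ at $\zeta=\rho$ gives
\[
|f(z)|\le \frac{a+\rho}{1+a\rho}.
\]
\item Since $\rho\varphi'(\rho)=Df(z)$, the estimate $|\varphi'(\rho)|\le (1-|\varphi(\rho)|^2)/(1-\rho^2)$ gives
\[
|Df(z)|\le \frac{\rho\,(1-|f(z)|^2)}{1-\rho^2}.
\]
\item For the coefficient blocks $\sum_{|\alpha|=k}|a_\alpha|r^\alpha$, $\sum_{|\alpha|=k}|a_\alpha|^2r^{2\alpha}$ and $k\sum_{|\alpha|=k}|a_\alpha|^2r^{2\alpha}$, I would use the homogeneous expansion $f=\sum_k P_k$ and the polydisk coefficient estimate used in the proof of Theorem D. This estimate is obtained by restricting to slices $\zeta\mapsto f(\zeta w)$ with $w$ on the distinguished boundary of $\mathbb{P}\Delta(0;1/n)$ and using $|P_k(w)|\le 1-a^2$. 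It gives $\sum_{|\alpha|=k}|a_\alpha|r^\alpha\le (1-a^2)\rho^k$. The quadratic sums would be dominated by a one-variable Bohr function $g(\zeta)=\sum b_k\zeta^k$ with $|b_k|\le 1-a^2$ evaluated at $\rho$.
\item For that one-variable function I would invoke the Liu--Liu--Ponnusamy lemmas behind Theorem B:
\[
\sum_{k\ge1}|b_k|\rho^k+\Bigl(\frac{1}{1+a}+\frac{\rho}{1-\rho}\Bigr)\sum_{k\ge1}|b_k|^2\rho^{2k}\le \frac{(1-a^2)\rho}{1-\rho},
\qquad
\sum_{k\ge1}k|b_k|^2\rho^{2k}\le \frac{(1-a^2)^2\rho^2}{(1-a^2\rho^2)^2}.
\]
Since $\mathbf{r}\le\rho$, I would replace $\mathbf{r}/(1-\mathbf{r})$ by $\rho/(1-\rho)$ only in the direction that keeps the bound valid.
\end{itemize}

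\textbf{Combining the bounds.} Writing $t=|f(z)|\in[0,(a+\rho)/(1+a\rho)]$, one gets for the first inequality
\[
\mathcal{D}\le t+\frac{\rho(1-t^2)}{1-\rho^2}+\frac{(1-a^2)\rho}{1-\rho}-(1-a^2)\rho+C\,\frac{(1-a^2)^2\rho^2}{(1-a^2\rho^2)^2}.
\]
Here $C=\tfrac14(1-\mathbf{r}_*)(2+5\mathbf{r}_*+5\mathbf{r}_*^2)$, and the subtracted term $(1-a^2)\rho$ removes the $k=1$ contribution already counted in $|Df|$.
\begin{itemize}
\item Since $t\mapsto t+\rho(1-t^2)/(1-\rho^2)$ is increasing for $\rho\le\mathbf{r}_*$, take $t=(a+\rho)/(1+a\rho)$. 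The right-hand side becomes an explicit function $\Phi(a,\rho)$, increasing in $\rho$, so it suffices to treat $\rho=\mathbf{r}_*$.
\item Write $\Phi(a,\mathbf{r}_*)-1=(1-a)\,\Psi(a)$ and show $\Psi\le 0$ on $[0,1)$.
\item $\Psi(1^-)=0$ is exactly the defining equation $2\mathbf{r}_*^2+3\mathbf{r}_*-1=0$ combined with the choice of $C$. That is, $C$ is the largest constant for which the first-order term in $(1-a)$ vanishes.
\item The case $\mathcal{E}$ is handled the same way, with $t^2$ in place of $t$, using the quartic $1-2\mathbf{r}-\mathbf{r}^2-\mathbf{r}^3-\mathbf{r}^4=0$ and $C=\mathbf{r}_{**}(1-\mathbf{r}_{**}^2)$.
\end{itemize}

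\textbf{Sharpness.} Take
\[
f_a(z)=\frac{a-(z_1+\cdots+z_n)}{1-a(z_1+\cdots+z_n)},
\]
which maps $\mathbb{P}\Delta(0;1/n)$ into $\overline{\mathbb{D}}$, evaluated at $z=(\mathbf{r},\dots,\mathbf{r})$. All the multidimensional sums then collapse to the one-variable sums of $\psi_a$ at $\rho=n\mathbf{r}$, since $\sum_{|\alpha|=k}\frac{k!}{\alpha!}r^\alpha=(n\mathbf{r})^k$. Expanding $\mathcal{D}$ as $a\to1^-$ gives $1+(1-a)\bigl(\text{positive}\bigr)+o(1-a)$ if either $\rho>\mathbf{r}_*$ or the area constant exceeds $C$, which proves both constants optimal.

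\textbf{Main obstacle.} I expect the hardest step to be the sign analysis of $\Psi(a)$ on all of $[0,1)$, not just near $a=1$. It is a rational function of $a$ with algebraic-number coefficients. I would first clear denominators to get a polynomial in $a$, then check its sign at $a=0$ and $a=1$ and monotonicity or concavity in between. I would also double-check that the multidimensional transfer of the quadratic sums is legitimate; if it is not, I would instead bound $\sum_{|\alpha|=k}|a_\alpha|^2r^{2\alpha}\le\bigl(\sum_{|\alpha|=k}|a_\alpha|r^\alpha\bigr)^2$ with the cruder majorant, which still yields the same extremal value since equality holds for $f_a$.
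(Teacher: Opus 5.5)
You follow the same route as the paper: Schwarz--Pick bounds for $|f(z)|$ and $|Df(z)|$, monotonicity in $t=|f(z)|$, the coefficient-block bound (the paper's Lemma D), Lemma C(a) for the area term, evaluation at $n\mathbf{r}=\mathbf{r}_*$, and $f_a$ with $a\to1^-$. Your slice $\varphi(\zeta)=f(\zeta z/(n\mathbf{r}))$ is a cleaner way than Lemmas A and B to get the first two bounds directly in $\rho$. However, the coefficient block is not established, for three reasons.

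(i) Slices through the distinguished boundary of $\mathbb{P}\Delta(0;1/n)$ only give $|a_\alpha|\le(1-a^2)n^{|\alpha|}$, and for $k\ge2$ the majorant of $P_k$ is not controlled by its sup norm. For $n=2$, $f(z)=\sqrt{2}\,(z_1^2+2z_1z_2-z_2^2)$ has $|f|\le1$ on $\mathbb{P}\Delta(0;1/2)$ and $a_0=0$, yet $\sum_{|\alpha|=2}|a_\alpha|r^\alpha=\sqrt{2}\,\rho^2$ at $r=(\mathbf{r},\mathbf{r})$. The paper's Lemmas C and D assume $|f|\le1$ on all of $\mathbb{P}\Delta(0;1_n)$. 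That gives $|a_\alpha|\le1-a^2$, and then $\binom{n+k-1}{k}\mathbf{r}^k\le\rho^k$.

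(ii) You cannot reach the $k\ge2$ tail by subtracting $(1-a^2)\rho$ from the $N=1$ estimate. The bound $\sum_{|\alpha|=1}|a_\alpha|r^\alpha\le(1-a^2)\rho$ is only an upper bound, and $A+B\le X$, $A\le Y$ do not give $B\le X-Y$. You need the $N=2$ case of Lemma D itself (here $t=0$, so the squared sum still starts at $k=1$), and it does not follow from the $N=1$ case.

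(iii) The one-variable inequality you quote needs an actual $g\in\mathcal{B}$, not merely $|b_k|\le1-a^2$. Your fallback $\sum_{|\alpha|=k}|a_\alpha|^2r^{2\alpha}\le\bigl(\sum_{|\alpha|=k}|a_\alpha|r^\alpha\bigr)^2$ adds a positive term of exact order $(1-a)^2$. That is fatal because $C$ lives at exactly that order. The area term is $O((1-a)^2)$, so $\Psi(1^-)=0$ holds at $\rho=\mathbf{r}_*$ for every $C$. $C$ is fixed by the vanishing of the $(1-a)^2$-coefficient, which is why the paper factors out $(1-a)^2$ and proves $\Phi_3\le\Phi_3(1)=0$. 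With your fallback, the bound at $n\mathbf{r}=\mathbf{r}_*$ exceeds $1$ as $a\to1^-$.

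Your sharpness argument fails as written. At $z=(\mathbf{r},\dots,\mathbf{r})$ your $f_a$ equals $(a-\rho)/(1-a\rho)$, which is the minimum of $|f(z)|$, not the Schwarz--Pick maximum used in the upper bound. Expanding gives
\[
\mathcal{D}-1=(1-a)\,\frac{-1+2\rho+3\rho^2-2\rho^3}{(1-\rho)^2}+O\bigl((1-a)^2\bigr).
\]
At $\rho=\mathbf{r}_*$ the numerator equals $8-2\sqrt{17}<0$, and it changes sign only near $\rho\approx0.355$ (for $\mathcal{E}$, near $0.555$). Take $z=(-\mathbf{r},\dots,-\mathbf{r})$ instead, or the paper's $f_a$ with plus signs. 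Then the first-order coefficients are $\frac{(2\rho^2+3\rho-1)(1+\rho^2)}{(1+\rho)^2(1-\rho)}$ and $\frac{2(\rho^4+\rho^3+\rho^2+2\rho-1)}{(1+\rho)^2(1-\rho)}$, which vanish exactly at $\mathbf{r}_*$ and $\mathbf{r}_{**}$.

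Optimality of $C$ is a second-order statement, not ``$1+(1-a)(\text{positive})$''. Two further caveats apply equally to the paper's own sharpness computation:
\begin{enumerate}
\item The identity $\sum_{|\alpha|=k}\frac{k!}{\alpha!}r^\alpha=\rho^k$ collapses only the linear sums. For $f_a$ and $n\ge2$, $\sum_{|\alpha|=k}|a_\alpha|^2r^{2\alpha}=|c_k|^2\mathbf{r}^{2k}\sum_{|\alpha|=k}(k!/\alpha!)^2<|c_k|^2\rho^{2k}$, and $\mathbf{r}/(1-\mathbf{r})\ne\rho/(1-\rho)$. So this family certifies $C$ only for $n=1$.
\item $f_a$ is bounded only on $\mathbb{P}\Delta(0;1/n)$, and for $n\ge2$ it has a pole in $\mathbb{P}\Delta(0;1_n)$ once $a>1/n$. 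It is therefore not admissible under the hypothesis that (i) requires.
\end{enumerate}
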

% Lead-in for Theorem 2.3
Finally, we extend Theorem C to several complex variables by incorporating the distance term $|f(z)-a_0|$ into the multidimensional majorant series.
\begin{thm}\label{thm-2.3}
	Let $f(z) = \sum_{|\alpha|=0}^{\infty} a_{\alpha}z^{\alpha}$ be a holomorphic function in the polydisk $\mathbb{P}\Delta(0; 1_n)$ such that $|f(z)| \le 1$ for all $z \in \mathbb{P}\Delta(0; 1/n)$. Suppose $z = (z_1, \dots, z_n) \in \mathbb{P}\Delta(0; 1/n)$ and $r = (r_1, r_2, \dots, r_n)$ such that $\|z\|_{\infty} = \mathbf{r}$. Then
	\begin{align*}
		\mathcal{F}^1_f(r):=\sum_{k=0}^{\infty} \sum_{|\alpha|=k} |a_{\alpha}| r^{\alpha} + \left( \frac{1}{1 + |a_0|} + \frac{\mathbf{r}}{1 - \mathbf{r}} \right) \sum_{k=1}^{\infty} \sum_{|\alpha|=k} |a_{\alpha}|^2 r^{2\alpha} + |f(z)-a_0|\leq 1,
	\end{align*}
	for $\mathbf{r}\leq 1/(5n)$ and the number $1/(5n)$ cannot be improved. Moreover, 
	\begin{align*}
		\mathcal{F}^2_f(r):=|a_0|^2+\sum_{k=1}^{\infty} \sum_{|\alpha|=k} |a_{\alpha}| r^{\alpha}& + \left( \frac{1}{1 + |a_0|} + \frac{\mathbf{r}}{1 - \mathbf{r}} \right) \sum_{k=1}^{\infty} \sum_{|\alpha|=k} |a_{\alpha}|^2 r^{2\alpha}\\& +|f(z)-a_0|\leq 1,
	\end{align*}
	for $\mathbf{r}\leq 1/(3n)$ and the number $1/(3n)$ cannot be improved.
\end{thm}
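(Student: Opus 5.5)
The plan is to reduce Theorem~\ref{thm-2.3} to the one-variable Theorem~C by the standard slice argument used for Theorem~D. Fix $z\in\mathbb{P}\Delta(0;1/n)$ with $\|z\|_\infty=\mathbf{r}$ and $r_j=|z_j|$. Set $\zeta_j=z_j/\mathbf{r}$ when $\mathbf{r}>0$, so $|\zeta_j|\le 1$, and consider the one-variable function
\[
g(\lambda)=f\!\left(\frac{\lambda\zeta_1}{n},\dots,\frac{\lambda\zeta_n}{n}\right)=\sum_{k=0}^{\infty}b_k\lambda^k,\qquad b_k=\frac{1}{n^k}\sum_{|\alpha|=k}a_\alpha\zeta^\alpha .
\]
Then $g$ maps $\mathbb{D}$ into $\overline{\mathbb{D}}$, and $g(n\mathbf{r})=f(z)$, $b_0=a_0$.

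Theorem~C cannot be applied directly to $g$. The moduli $|b_k|$ are not the majorant terms $\sum_{|\alpha|=k}|a_\alpha|r^\alpha$, because of cancellations and phases in $\zeta^\alpha$. The substitute is the multidimensional coefficient bound I expect to be among the paper's lemmas, as in the proof of Theorem~D:
\[
\sum_{|\alpha|=k}|a_\alpha|r^\alpha\le (1-|a_0|^2)(n\mathbf{r})^k,\qquad k\ge1 .
\]
This follows by applying Wiener's estimate $|c_k|\le 1-|c_0|^2$ to slices $\lambda\mapsto f(\lambda w)$ with $\|w\|_\infty\le 1/n$. One then averages over torus rotations, or uses the polydisk Cauchy estimates together with the count of multi-indices; this averaging is what produces the factor $n^k$.

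With $a=|a_0|$ and $\rho=n\mathbf{r}$, I would treat the quadratic term through a companion lemma, which is the higher-dimensional version of the Liu--Liu--Ponnusamy inequality:
\[
\sum_{k\ge1}\Bigl(\sum_{|\alpha|=k}|a_\alpha|r^\alpha\Bigr)+\Bigl(\tfrac{1}{1+a}+\tfrac{\rho}{1-\rho}\Bigr)\sum_{k\ge1}\sum_{|\alpha|=k}|a_\alpha|^2r^{2\alpha}\le (1-a^2)\frac{\rho}{1-\rho}.
\]
I would prove it by applying the one-variable estimate to slices and using $\sum_{|\alpha|=k}|a_\alpha|^2r^{2\alpha}\le\bigl(\sum_{|\alpha|=k}|a_\alpha|r^\alpha\bigr)^2$. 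The term $|f(z)-a_0|$ is bounded by $\sum_{k\ge1}\sum_{|\alpha|=k}|a_\alpha|r^\alpha\le(1-a^2)\rho/(1-\rho)$. Altogether this gives
\[
\mathcal{F}^1_f(r)\le a+2(1-a^2)\frac{\rho}{1-\rho},
\]
which is at most $1$ exactly when $2(1+a)\rho/(1-\rho)\le 1$. Since $a<1$, this holds for all $a$ when $\rho\le 1/5$, i.e.\ $\mathbf{r}\le1/(5n)$. In the same way,
\[
\mathcal{F}^2_f(r)\le a^2+2(1-a^2)\frac{\rho}{1-\rho},
\]
and this is at most $1$ iff $\rho\le1/3$.

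\textbf{Main obstacle.} The hardest step is justifying the quadratic-term lemma in several variables with exactly the constant $\frac{1}{1+a}+\frac{\rho}{1-\rho}$. The one-variable proof uses the sharp inequality $\sum_{k\ge1}|b_k|\rho^k+\bigl(\frac{1}{1+a}+\frac{\rho}{1-\rho}\bigr)\sum_{k\ge1}|b_k|^2\rho^{2k}\le(1-a^2)\frac{\rho}{1-\rho}$. That inequality rests on $\sum_{k\ge1}|b_k|^2\le(1-a^2)^2$-type refinements rather than termwise bounds. In several variables I would first try to reproduce it by choosing, for each $k$, the slice direction $w$ that aligns the phases of $a_\alpha$ for $|\alpha|=k$. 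I would then apply the Carlson-type inequality $\sum_{k}|c_k|^2\le 1-|c_0|^2$ for $\bigl(f(\lambda w)-a_0\bigr)/(1-\overline{a_0}f(\lambda w))$. If that fails, I would fall back on citing the corresponding lemma already used for Theorem~D in \cite{Ahamed-Majumder-Sarkar-CAOT-2026}.

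\textbf{Sharpness.} I would take $f(z)=\psi_a(z_1+\cdots+z_n)$ with $\psi_a(\lambda)=(a-n\lambda)/(1-an\lambda)$ suitably rescaled, evaluated at $z=(\mathbf{r},\dots,\mathbf{r})$. Here every estimate is an equality, and the expression reduces to the one-variable extremal of Theorem~C at $\rho=n\mathbf{r}$. Letting $a\to1^-$ shows the expression exceeds $1$ for $\rho>1/5$, respectively $\rho>1/3$.
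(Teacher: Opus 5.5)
Your argument is correct once the companion lemma is taken to be Lemma D with $N=1$, i.e.\ the paper's \eqref{eq-2.3}. It then differs from the paper's proof only in how $|f(z)-a_0|$ is handled.

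\textbf{Comparison with the paper.} The paper bounds $|f(z)-a_0|$ by $\sum_{k\ge1}\sum_{|\alpha|=k}|a_\alpha|\mathbf{r}^{|\alpha|}$ and feeds this into Lemma C(c), which forces a case split. For $|a_0|\ge\mathbf{r}$ it gets $(1-a^2)\rho/(1-a\rho)$, which is exactly the value attained by the extremal function. For $|a_0|<\mathbf{r}$ it closes with a crude numerical estimate. You observe instead that the same majorant sum is already controlled by \eqref{eq-2.3} once the nonnegative quadratic term is discarded. This gives $\mathcal{F}^1_f\le a+2(1-a^2)\rho/(1-\rho)$ and $\mathcal{F}^2_f\le a^2+2(1-a^2)\rho/(1-\rho)$, so both parts become one-line inequalities with no case analysis and no use of Lemma C(c). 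The paper's route buys a sharper intermediate estimate, one attained by the extremal. Yours is shorter and rests on a single lemma. The two bounds on $|f(z)-a_0|$ agree to first order as $a\to1^-$, so the radii $1/(5n)$ and $1/(3n)$ come out the same.

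\textbf{Side points.}

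Your ``main obstacle'' is not an obstacle. The weight in Theorem~\ref{thm-2.3} is $\frac{1}{1+|a_0|}+\frac{\mathbf{r}}{1-\mathbf{r}}$, not $\frac{1}{1+a}+\frac{\rho}{1-\rho}$. So Lemma D supplies exactly what is needed, and the stronger $\rho$-version is superfluous. Your plan for proving that version would fail anyway, for three reasons:
\begin{enumerate}
\item For $k\ge2$ the phases of the $\binom{n+k-1}{k}$ terms $a_\alpha w^\alpha$ cannot in general be aligned by $n$ torus parameters.
\item The one-variable inequality couples all $k$, so slices chosen separately for each $k$ cannot be combined.
\item The termwise estimate $\sum_{|\alpha|=k}|a_\alpha|^2r^{2\alpha}\le\bigl(\sum_{|\alpha|=k}|a_\alpha|r^\alpha\bigr)^2$ loses the sharp constant.
\end{enumerate}

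The termwise bound $\sum_{|\alpha|=k}|a_\alpha|r^\alpha\le(1-|a_0|^2)(n\mathbf{r})^k$ also does not follow from Wiener's estimate on slices with $\|w\|_\infty\le1/n$. That only yields $\sup_{\mathbb{T}^n}|P_k|\le(1-|a_0|^2)n^k$, which does not control the $\ell^1$-sum of the coefficients. The bound needs $|f|\le1$ on all of $\mathbb{P}\Delta(0;1_n)$. There, slices through $w\in\mathbb{T}^n$ give $|a_\alpha|\le1-|a_0|^2$, and there are $\binom{n+k-1}{k}\le n^k$ multi-indices of length $k$. This is the standing assumption of Lemmas C and D and of the paper's proof. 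In any case you do not need the termwise bound, since \eqref{eq-2.3} controls the whole sum.

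Finally, for $n\ge2$ it is not true that every estimate becomes an equality for the test function $(a-\sum_j z_j)/(1-a\sum_j z_j)$ at $(\mathbf{r},\dots,\mathbf{r})$:
\begin{itemize}
\item the quadratic sum involves $\sum_{|\alpha|=k}(k!/\alpha!)^2<n^{2k}$;
\item the weight contains $\mathbf{r}$ rather than $\rho$;
\item $|f(z)-a_0|=(1-a^2)\rho/(1-a\rho)$ lies strictly below your bound.
\end{itemize}
The paper's sharpness computation tacitly uses the same one-variable identity. The conclusion survives because you may simply drop the nonnegative quadratic term. This gives $\mathcal{F}^1_f\ge1+(1-a)\bigl(-1+\frac{2(1+a)\rho}{1-a\rho}\bigr)$ and $\mathcal{F}^2_f\ge1+(1-a^2)\bigl(-1+\frac{2\rho}{1-a\rho}\bigr)$. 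As $a\to1^-$ the brackets tend to $\frac{5\rho-1}{1-\rho}$ and $\frac{3\rho-1}{1-\rho}$, which are positive exactly when $\rho>1/5$ and $\rho>1/3$ respectively.
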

\section{\bf Key Lemmas}
We begin by reviewing a set of preliminary lemmas essential for developing our main results. The first of these is a specific formulation of Theorem 2.2 established in \cite{Chen-Hamada-Ponnusamy-Vijayakumar-JAM-2024}.
 \begin{lemA}
	Let $f$ be holomorphic in the polydisk $\mathbb{P}\Delta(0; 1_n)$ such that $|f(z)| \le 1$ for all $z \in \mathbb{P}\Delta(0; 1_n)$. Then for all $z \in \mathbb{P}\Delta(0; 1_n)$, we have
	\[
	|f(z)| \le \frac{|f(0)| + \|z\|_\infty}{1 + |f(0)| \|z\|_\infty}.
	\]
\end{lemA}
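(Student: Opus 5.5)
We would prove Lemma A by reducing to the one-variable Schwarz--Pick lemma on a complex line through the origin. Fix $z\in\mathbb{P}\Delta(0;1_n)$. If $z=0$ the inequality reads $|f(0)|\le |f(0)|$, so assume $z\neq 0$ and put $\rho:=\|z\|_\infty\in(0,1)$ and $w:=z/\rho$, so that $\|w\|_\infty=1$. For $\lambda\in\mathbb{D}$ every coordinate satisfies $|\lambda w_j|\le|\lambda|<1$, hence $\lambda w\in\mathbb{P}\Delta(0;1_n)$. Therefore
\[
g(\lambda):=f(\lambda w),\qquad \lambda\in\mathbb{D},
\]
is holomorphic on $\mathbb{D}$ with $|g|\le 1$, $g(0)=f(0)$ and $g(\rho)=f(z)$. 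It thus suffices to prove the one-variable estimate $|g(\lambda)|\le (a+|\lambda|)/(1+a|\lambda|)$, where $a:=|g(0)|$, and then take $\lambda=\rho$.

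For the one-variable estimate we would split into two cases.

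\emph{Case $a=1$.} By the maximum principle $g$ is a unimodular constant. The right-hand side equals $(1+|\lambda|)/(1+|\lambda|)=1$, so the inequality holds trivially.

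\emph{Case $a<1$.} Again by the maximum principle $g(\mathbb{D})\subset\mathbb{D}$. The Schwarz--Pick lemma then gives
\[
\delta\bigl(g(\lambda),g(0)\bigr)\le|\lambda|,
\qquad
\delta(u,v):=\left|\frac{u-v}{1-\bar v u}\right|.
\]
We next need the elementary fact that, for $|v|=a$ and $\delta(u,v)\le t$, one has $|u|\le (a+t)/(1+at)$. To see this, apply the disk automorphism $\varphi_v(\xi)=(\xi+v)/(1+\bar v\xi)$, which maps $\mathbb{D}$ onto itself and sends $0$ to $v$. Writing $u=\varphi_v(\xi)$ with $|\xi|=\delta(u,v)\le t$, we get
\[
|u|\le\frac{|\xi|+a}{1-a|\xi|}\cdot\frac{1-a|\xi|}{1+a|\xi|}\quad\text{is not the right route;}
\]
instead we use the standard computation
\[
1-|u|^2=\frac{(1-|\xi|^2)(1-a^2)}{|1+\bar v\xi|^2}\ \ge\ \frac{(1-|\xi|^2)(1-a^2)}{(1+a|\xi|)^2}=1-\left(\frac{a+|\xi|}{1+a|\xi|}\right)^2 .
\]
This yields $|u|\le (a+|\xi|)/(1+a|\xi|)$. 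Since $s\mapsto (a+s)/(1+as)$ is increasing on $[0,1)$, it follows that $|u|\le (a+t)/(1+at)$. Applying this with $u=g(\lambda)$, $v=g(0)$ and $t=|\lambda|$ gives the one-variable estimate.

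Finally, take $\lambda=\rho$ to obtain
\[
|f(z)|=|g(\rho)|\le\frac{|f(0)|+\|z\|_\infty}{1+|f(0)|\,\|z\|_\infty}.
\]

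The only step needing care is this pseudo-hyperbolic bound on $|u|$, which is the classical identity for $1-|\varphi_v(\xi)|^2$ together with the estimate $|1+\bar v\xi|\le 1+a|\xi|$. The reduction to a complex line is immediate, because in the sup-norm geometry of the polydisk the normalized direction $w$ satisfies $\|w\|_\infty=1$, so the whole line segment $\{\lambda w:\lambda\in\mathbb{D}\}$ stays inside $\mathbb{P}\Delta(0;1_n)$.
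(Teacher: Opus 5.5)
Your proof is correct. The paper gives no proof of Lemma A. It imports the inequality as a special case of Theorem 2.2 in the paper of Chen, Hamada, Ponnusamy and Vijayakumar, where it is stated in a more general framework, and the polydisk case is read off from it.

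Your route is self-contained. You restrict $f$ to the complex disc $\lambda\mapsto \lambda z/\|z\|_\infty$ and apply Schwarz--Pick to $g$. You then convert $\delta(g(\lambda),g(0))\le|\lambda|$ into a modulus bound via the identity $1-|\varphi_v(\xi)|^2=(1-|\xi|^2)(1-|v|^2)/|1+\bar v\xi|^2$ together with $|1+\bar v\xi|\le 1+a|\xi|$. This also shows exactly what is used: only that $\mathbb{P}\Delta(0;1_n)$ is the unit ball of $\|\cdot\|_\infty$, so that $\lambda z/\|z\|_\infty$ stays inside for $|\lambda|<1$. The same argument therefore works verbatim for the unit ball of any norm on $\mathbb{C}^n$, with $\|z\|_\infty$ replaced by that norm. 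The citation buys brevity; your argument buys transparency for a few extra lines.

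Two cosmetic points:
\begin{itemize}
\item Delete the abandoned display labelled ``is not the right route''. The product shown there happens to simplify to the correct bound $(|\xi|+a)/(1+a|\xi|)$, but it is stated without justification and then disowned, which only confuses the reader.
\item In the case $a=1$ you do not need the maximum principle. The right-hand side equals $1$, and $|f(z)|\le 1$ is part of the hypothesis.
\end{itemize}
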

We shall make use of the following lemma, which can be found in \cite[Corollary 1.3]{Chen-Liu-IJPAM-2012}.
\begin{lemB}
	Let $f$ be holomorphic in the polydisk $\mathbb{P}\Delta(0; 1_n)$ such that $|f(z)| < 1$ for all $z \in \mathbb{P}\Delta(0; 1_n)$. Then for any multi-index $\alpha = (\alpha_1, \dots, \alpha_n)$, we have
	\[
	\left| \frac{\partial^{|\alpha|} f(z)}{\partial z_1^{\alpha_1} \dots \partial z_n^{\alpha_n}} \right| \le \frac{\alpha! \left(1 - |f(z)|^2\right)}{\left(1 - \|z\|_\infty^2\right)^{|\alpha|} }\left(1 + \|z\|_\infty\right)^{|\alpha|-N}
	\]
	for all $z \in \mathbb{P}\Delta(0; 1_n)$, where $N$ is the number of indices $j$ such that $\alpha_j \neq 0$.
\end{lemB}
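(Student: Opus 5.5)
The plan is to reduce the derivative estimate at an arbitrary point $z$ to a coefficient estimate at the origin. I would do this by composing $f$ with coordinatewise disk automorphisms and then expanding back by the chain rule in power-series form. Throughout, fix $z\in\mathbb{P}\Delta(0;1_n)$ and a multi-index $\alpha$ with $N\ge 1$ nonzero entries. The case $\alpha=0$ is trivial, and is not really covered by the formula.

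\textbf{Step 1 (moving $z$ to the origin).} For each $j$ let
\[
\varphi_j(w_j)=\frac{z_j+w_j}{1+\bar z_j w_j},
\]
and put $F(w)=f(\varphi_1(w_1),\dots,\varphi_n(w_n))$. Then $F$ is holomorphic on the unit polydisk with $|F|<1$ and $F(0)=f(z)$. Write $F(w)=\sum_\beta c_\beta w^\beta$.

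\textbf{Step 2 (coefficient bound).} I would show that $|c_\beta|\le 1-|c_0|^2$ for every $\beta\neq 0$. To isolate a single coefficient, take the one-variable slice $\lambda\mapsto F(\eta_1\lambda,\dots,\eta_n\lambda)$ with $\eta\in\mathbb{T}^n$. Multiply by $\bar\eta^\beta$ and average over the torus. The result is $c_\beta\lambda^{|\beta|}$, the $|\beta|$-th coefficient of a slice-type function bounded by $1$, with constant term $c_0$. The classical one-variable bound $|a_k|\le 1-|a_0|^2$, applied after this averaging via the Wiener trick (rotations by roots of unity), then gives the claim. This is standard (Aizenberg-type), and I regard it as routine.

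\textbf{Step 3 (expansion back).} Write $\psi_j=\varphi_j^{-1}$, so that $f(\zeta)=F(\psi_1(\zeta_1),\dots,\psi_n(\zeta_n))$. With $u=\zeta_j-z_j$ and $t_j=|z_j|$,
\[
\psi_j(\zeta_j)=\frac{u}{1-t_j^2}\cdot\frac{1}{1-\bar z_j u/(1-t_j^2)} .
\]
The coefficient of $u^{m}$ in $\psi_j^{b}$, for $1\le b\le m$, therefore has modulus
\[
\binom{m-1}{b-1}\frac{t_j^{m-b}}{(1-t_j^2)^{m}} .
\]
For $b=0$ the power $\psi_j^0=1$ contributes only when $m=0$. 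Hence the Taylor coefficient $\partial^\alpha f(z)/\alpha!$ equals a sum over those $\beta$ with $1\le\beta_j\le\alpha_j$ whenever $\alpha_j>0$, and $\beta_j=0$ whenever $\alpha_j=0$. Every such $\beta$ is nonzero, so Step 2 applies to each term.

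\textbf{Step 4 (summing the bounds).} Summing the estimates factorizes over the coordinates. For each $j$ with $\alpha_j>0$,
\[
\sum_{b=1}^{\alpha_j}\binom{\alpha_j-1}{b-1}t_j^{\alpha_j-b}=(1+t_j)^{\alpha_j-1}.
\]
This yields
\[
\frac{|\partial^\alpha f(z)|}{\alpha!}\le (1-|f(z)|^2)\prod_{\alpha_j>0}\frac{(1+t_j)^{\alpha_j-1}}{(1-t_j^2)^{\alpha_j}} .
\]
Using $t_j\le\|z\|_\infty$ and $\sum_{\alpha_j>0}(\alpha_j-1)=|\alpha|-N$, the right-hand side is at most the bound stated in Lemma B.

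The main obstacle I expect is the bookkeeping in Step 3. One must check that the $\beta_j=0$ terms really vanish for active coordinates, since otherwise $c_0$ would appear without the factor $1-|f(z)|^2$. One must also check that the binomial sum collapses exactly to $(1+t_j)^{\alpha_j-1}$, which is what produces the exponent $|\alpha|-N$.
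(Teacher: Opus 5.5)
The paper does not prove Lemma B: it quotes it from Chen and Liu (Corollary 1.3 there) and uses it only with $\alpha=e_k$ in the proof of Theorem 2.2, so there is no in-paper argument to compare with. Your proof is correct and self-contained. It is the polydisc analogue of the classical proof of Ruscheweyh's estimate $|g^{(m)}(z)|\le m!\,(1-|g(z)|^2)(1+|z|)^{m-1}(1-|z|^2)^{-m}$. The bookkeeping in Steps 3--4 is right: the modulus of the $u^m$-coefficient of $\psi_j^b$, the support condition $1\le\beta_j\le\alpha_j$ on active coordinates, and the collapse of the binomial sum to $(1+t_j)^{\alpha_j-1}$. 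Compared with citing, your route shows exactly where the exponent $|\alpha|-N$ comes from. It also gives the sharper coordinatewise bound
\[
|\partial^\alpha f(z)|\le \alpha!\,(1-|f(z)|^2)\prod_{\alpha_j>0}\frac{(1+|z_j|)^{\alpha_j-1}}{(1-|z_j|^2)^{\alpha_j}},
\]
from which Lemma B follows because $t\mapsto (1+t)^{m-1}(1-t^2)^{-m}$ is increasing on $[0,1)$ for $m\ge 1$.

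Three points should be tightened.

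First, Step 2 as written has the order of operations reversed. After you multiply by $\bar\eta^\beta$ and average, the function $\lambda\mapsto c_\beta\lambda^{|\beta|}$ has constant term $0$, not $c_0$. Applying the one-variable bound to it then only gives $|c_\beta|\le 1$. Apply the bound slice by slice first. For each $\eta\in\mathbb{T}^n$, the $k$-th coefficient $P_k(\eta)=\sum_{|\gamma|=k}c_\gamma\eta^\gamma$ of $\lambda\mapsto F(\eta\lambda)$ satisfies $|P_k(\eta)|\le 1-|c_0|^2$. Then $c_\beta=\int_{\mathbb{T}^n}P_{|\beta|}(\eta)\bar\eta^\beta\,d\sigma(\eta)$ yields $|c_\beta|\le 1-|c_0|^2$.

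Second, in Step 3 you should say why rearranging into a power series in $\zeta-z$ is legitimate. Since $|c_\beta|\le 1$ and the majorant series of $\psi_j$ is $s/(1-t_j^2-t_js)$, the expanded double series converges absolutely for $|\zeta_j-z_j|<1-|z_j|$. Hence the coefficient of $(\zeta-z)^\alpha$ really is $\partial^\alpha f(z)/\alpha!$.

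Third, the case $\alpha=0$ is not trivial; the inequality is false there. With $N=0$ it reads $|f(z)|\le 1-|f(z)|^2$, which fails for example for $f\equiv 0.9$. The lemma must be read with $\alpha\neq 0$, which covers the only case the paper uses.
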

\noindent Recently, Ahamed \emph{et al.} \cite{Ahamed-Majumder-Liu-Sarkar-2025} have established the following lemma.
 
\begin{lemC}\cite[Lemma 3.1]{Ahamed-Majumder-Liu-Sarkar-2025}
	Let $f$ be a holomorphic function in the polydisk $\mathbb{P}\Delta(0; 1_n)$ such that $|f(z)| \le 1$ for all $z \in \mathbb{P}\Delta(0; 1_n)$ and $f(z) = \sum_{|\alpha|=0}^\infty a_\alpha z^\alpha$ for all $z \in \mathbb{P}\Delta(0; 1_n)$. Then, for $r = (r_1, r_2, \dots, r_n)$ and $\mathbf{r} = \|r\|_\infty$, we have the estimates:
	
	\begin{enumerate}
		\item[(a)] 
		\begin{align}\label{eq-1.2}
			\sum_{k=1}^\infty k \sum_{|\alpha|=k} |a_\alpha|^2 \mathbf{r}^{2|\alpha|} \le \frac{\mathbf{r}^2 (1 - |a_0|^2)^2}{(1 - |a_0|^2 \mathbf{r}^2)^2}, \quad \text{for } 0 < \mathbf{r} \le \frac{1}{\sqrt{2}}.
		\end{align}
		
		\item [(b)]
		\begin{align}\label{eq-1.3}
			\sum_{k=1}^\infty \sum_{|\alpha|=k} |a_\alpha|^2 \mathbf{r}^{|\alpha|} \le \frac{\mathbf{r} (1 - |a_0|^2)^2}{1 - |a_0|^2 \mathbf{r}}, \quad \text{for } 0 < \mathbf{r} < 1.
		\end{align}
		
		\item [(c)]
		
		\begin{align*}
			\sum_{k=1}^\infty \sum_{|\alpha|=k} |a_\alpha| r^\alpha\leq\sum_{k=1}^\infty \sum_{|\alpha|=k} |a_\alpha| \mathbf{r}^{|\alpha|}\leq
			\begin{cases}
				\dfrac{\sqrt{n} \mathbf{r} (1 - |a_0|^2)}{1 - n|a_0|\mathbf{r}}, & \text{for } |a_0| \ge \mathbf{r}, \\[12pt]
				\dfrac{\sqrt{n} \mathbf{r} \sqrt{1 - |a_0|^2}}{\sqrt{1 - n \mathbf{r}^2}}, & \text{for } |a_0| < \mathbf{r}.
			\end{cases}
		\end{align*}
		
	\end{enumerate}
	The inequalities \eqref{eq-1.2} and \eqref{eq-1.3} are sharp.
\end{lemC}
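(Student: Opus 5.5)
The plan is to reduce everything to one variable by slicing along complex lines through the origin, and then to average over the distinguished boundary (the torus $\mathbb{T}^n=C^n(0;1_n)$). Write $f(z)=\sum_{k\ge0}P_k(z)$ with $P_k(z)=\sum_{|\alpha|=k}a_\alpha z^\alpha$. For a fixed $w\in\mathbb{T}^n$, set $g_w(\zeta)=f(\zeta w)=a_0+\sum_{k\ge1}P_k(w)\zeta^k$. Then $g_w$ belongs to $\mathcal{B}$, and it has the same constant term $a_0$ for every $w$.

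Orthogonality of monomials on $\mathbb{T}^n$ gives
\[
\sum_{|\alpha|=k}|a_\alpha|^2=\frac{1}{(2\pi)^n}\int_{\mathbb{T}^n}|P_k(w)|^2\,d\theta=:\|P_k\|_2^2 .
\]
Moreover $\sum_{|\alpha|=k}|a_\alpha|^2\mathbf{r}^{2|\alpha|}=\|P_k\|_2^2\mathbf{r}^{2k}$. Hence (a) and (b) follow by integrating over $w$ the corresponding one-variable inequalities for $g_w$, provided those inequalities depend only on $|g_w(0)|=|a_0|$. The needed one-variable facts are the following, for $b\in\mathcal{B}$ with $b(\zeta)=\sum b_k\zeta^k$ and $a=|b_0|$:
\[
\sum_{k\ge1}k|b_k|^2r^{2k}\le\frac{r^2(1-a^2)^2}{(1-a^2r^2)^2}\quad (r\le 1/\sqrt2),\qquad \sum_{k\ge1}|b_k|^2r^{k}\le\frac{r(1-a^2)^2}{1-a^2r}\quad (0<r<1).
\]
The first is the Kayumov--Ponnusamy area estimate. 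The second I would prove via subordination: write $b=\omega$-composition with the disk automorphism, use that $\sum|b_k|^2r^k$ is an $H^2$ norm on $|\zeta|=\sqrt r$, and compare with $\psi_a$. Both inequalities are attained by $\psi_a$. Sharpness in the several-variable setting then comes from $f(z)=\psi_a(z_1)$, since for this function all sums collapse to the one-variable ones.

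For (c), the first inequality is immediate because $r^\alpha\le\mathbf{r}^{|\alpha|}$. For the second, let $N_k=\binom{n+k-1}{k}\le n^k$ be the number of multi-indices of degree $k$. Cauchy--Schwarz inside each degree gives $\sum_{|\alpha|=k}|a_\alpha|\le n^{k/2}\|P_k\|_2$. Then a weighted Cauchy--Schwarz over $k$ with a free parameter $t\in(0,1]$ gives
\[
\sum_{k\ge1}n^{k/2}\|P_k\|_2\mathbf{r}^k\le\Big(\sum_{k\ge1}\big(n\mathbf{r}^2/t\big)^k\Big)^{1/2}\Big(\sum_{k\ge1}\|P_k\|_2^2t^k\Big)^{1/2}.
\]
The choice of $t$ depends on the case.
\begin{itemize}
\item \textbf{Case $|a_0|<\mathbf{r}$.} Take $t=1$ and use $\sum_{k\ge1}\|P_k\|_2^2\le 1-|a_0|^2$ (Parseval applied to $g_w$, or (b) with $\mathbf{r}\to1$). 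This gives exactly $\sqrt n\,\mathbf{r}\sqrt{1-|a_0|^2}/\sqrt{1-n\mathbf{r}^2}$.
\item \textbf{Case $|a_0|\ge\mathbf{r}$.} Take $t=\mathbf{r}/|a_0|\le1$ and apply (b) with radius $t$. The product becomes
\[
\sqrt{\frac{n|a_0|\mathbf{r}}{1-n|a_0|\mathbf{r}}}\cdot\sqrt{\frac{(\mathbf{r}/|a_0|)(1-|a_0|^2)^2}{1-|a_0|\mathbf{r}}}
=\frac{\sqrt n\,\mathbf{r}(1-|a_0|^2)}{\sqrt{(1-n|a_0|\mathbf{r})(1-|a_0|\mathbf{r})}}.
\]
This is at most $\sqrt n\,\mathbf{r}(1-|a_0|^2)/(1-n|a_0|\mathbf{r})$, because $1-|a_0|\mathbf{r}\ge1-n|a_0|\mathbf{r}$.
\end{itemize}

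I expect the main obstacle to be a clean proof of the one-variable weighted estimate behind (b), and checking that it holds with the exact constant for all $0<r<1$. If subordination does not give it directly, I would write $b=(a-\omega)/(1-\bar a\omega)$ with $|\omega|\le1$ after a rotation, and use the Goluzin/Schwarz--Pick type inequality $\sum_{k\ge1}|b_k|^2\rho^{2k}\le\sum_{k\ge1}|c_k|^2\rho^{2k}$ for $b\prec\psi_a$ (Littlewood subordination for $H^2$ means), with $\rho=\sqrt r$. The point to check is that subtracting the constant term keeps the comparison valid. This holds because both functions have constant term of modulus $a$, and Littlewood's subordination theorem controls the full $H^2$ norm.
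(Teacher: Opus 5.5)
The paper gives no proof of Lemma C. It is quoted from Ahamed, Majumder, Liu and Sarkar and used as a black box, so there is no in-paper argument to compare with. Judged on its own, your proof is correct and complete up to two small omissions in (c) noted below.

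\textbf{Parts (a) and (b).} The slicing step is sound. For $w\in\mathbb{T}^n$ the slice $g_w(\zeta)=f(\zeta w)$ satisfies $|g_w|\le 1$ on $\mathbb{D}$ and $g_w(0)=a_0$. Orthogonality on $\mathbb{T}^n$ gives $\|P_k\|_2^2=\sum_{|\alpha|=k}|a_\alpha|^2$. Since both one-variable bounds depend only on $|a_0|$, integrating them over $\mathbb{T}^n$ (Tonelli) yields (a) and (b) exactly.

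Your subordination argument for the one-variable version of (b) works once it is written as $b=\varphi\circ\omega$, where $\varphi$ is the automorphism with $\varphi(0)=b_0$ and $\omega(0)=0$. Littlewood's theorem then compares the full $H^2$ means on $|\zeta|=\sqrt{r}$. The constant terms cancel because $|b_0|=|\varphi(0)|$. The function $\psi_a(z_1)$ gives equality in both (a) and (b), so sharpness follows.

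\textbf{Part (c).} Make two points explicit.
\begin{enumerate}
\item The geometric series in your Cauchy--Schwarz step need $n|a_0|\mathbf{r}<1$ in the first case and $n\mathbf{r}^2<1$ in the second. The lemma as printed omits these conditions. Without them its right-hand sides are negative or undefined, so they are implicit hypotheses, and they are exactly what your argument uses.
\item When $|a_0|=\mathbf{r}$ you apply (b) at $t=1$, which lies outside its stated range $0<t<1$. That case is simply Parseval: $\sum_{k\ge1}\|P_k\|_2^2\le 1-|a_0|^2$.
\end{enumerate}

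Your route reduces the whole lemma to two classical one-variable facts plus Parseval on the torus. In the case $|a_0|\ge\mathbf{r}$ it also gives a slightly sharper intermediate bound, $\sqrt{n}\,\mathbf{r}(1-|a_0|^2)/\sqrt{(1-n|a_0|\mathbf{r})(1-|a_0|\mathbf{r})}$, than the one stated.
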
 
\begin{lemD}\cite[Lemma 3.5]{Ahamed-Majumder-Sarkar-CAOT-2026}
	Let $f$ be a holomorphic function in the polydisk $\mathbb{P}\Delta(0; 1_n)$ such that $|f(z)| \le 1$ and $f(z) = \sum_{|\alpha|=0}^\infty a_\alpha z^\alpha$ for all $z \in \mathbb{P}\Delta(0; 1_n)$. Then for any $N \in \mathbb{N}$, the following sharp inequality holds:
	\begin{align*}
		\sum_{k=N}^\infty \sum_{|\alpha|=k} |a_\alpha| r^\alpha +& \operatorname{sgn}(t) \sum_{k=1}^t \sum_{|\alpha|=k} |a_\alpha|^2 \frac{\mathbf{r}^N}{1 - \mathbf{r}} + \left( \frac{1}{1 + |a_0|} + \frac{\mathbf{r}}{1 - \mathbf{r}} \right) \sum_{k=t+1}^\infty \sum_{|\alpha|=k} |a_\alpha|^2 r^{2\alpha}\\& \le \frac{(1 - |a_0|^2)(n\mathbf{r})^N}{1 - n\mathbf{r}}
	\end{align*}
	for $n\mathbf{r} \in [0, 1)$, where $r = (r_1, r_2, \dots, r_n)$, $\mathbf{r} = \|r\|_\infty$, and $t = \left[\frac{N-1}{2}\right]$, with $[x]$ denoting the largest integer no more than $x$ for a real number $x$.
\end{lemD}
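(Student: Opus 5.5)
The plan is to reduce Lemma D to its one-variable prototype, the refined coefficient inequality of Ponnusamy, Vijayawickrema and Wirths used in \cite{Liu-Liu-Ponnusamy-BDSM-2021}:
\[
\sum_{k=N}^{\infty}|b_k|\rho^k+\operatorname{sgn}(t)\sum_{k=1}^{t}|b_k|^2\frac{\rho^N}{1-\rho}+\Big(\frac{1}{1+|b_0|}+\frac{\rho}{1-\rho}\Big)\sum_{k=t+1}^{\infty}|b_k|^2\rho^{2k}\le\frac{(1-|b_0|^2)\rho^N}{1-\rho},
\]
valid for $g(\zeta)=\sum b_k\zeta^k$ with $|g|\le1$ on $\mathbb{D}$. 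I have not checked whether the authors argue this way; the reduction below is my own plan. The dimensional factor $n^N$ and the radius $n\mathbf r<1$ should enter only through the count $\#\{\alpha:|\alpha|=k\}=\binom{n+k-1}{k}\le n^k$.

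\emph{Step 1 (coefficient information).} Set $a=|a_0|$. Applying Lemma B at $z=0$ gives $|a_\alpha|\le 1-a^2$ for every $\alpha\neq0$. Parseval on the torus $|z_j|=\rho\to1$ gives $a^2+\sum_{\alpha\ne0}|a_\alpha|^2\le 1$.

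The point is to reproduce the one-variable proof with these two facts. That proof rests on one refined estimate, obtained from Wiener's trick applied to $g$ composed with $\zeta\mapsto\zeta^k$, and then summed geometrically. The refined estimate is
\[
|b_k|\le 1-|b_0|^2-\frac{|b_k|^2}{1+|b_0|}\qquad (k\ge1),
\]
which is equivalent to the Schwarz--Pick bound for $(g-b_0)/(1-\bar b_0 g)$.

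\emph{Step 2 (multidimensional refined estimate).} The analogue I need, for each fixed $\alpha\ne0$, is
\[
|a_\alpha|\le 1-a^2-\frac{|a_\alpha|^2}{1+a}.
\]
To get it, take a $k$-th root of unity averaging in each coordinate to isolate the monomial $z^\alpha$, namely $h(z)=\prod_j\frac{1}{\alpha_j}\sum_{\omega_j^{\alpha_j}=1}f(\omega_1z_1,\dots,\omega_nz_n)$ (only coordinates with $\alpha_j\ge1$ are averaged). Then restrict $h$ to a one-variable slice $\zeta\mapsto(\zeta e^{i\theta_1},\dots,\zeta e^{i\theta_n})$. The result is a self-map of $\mathbb{D}$ with constant term $a_0$, in which $a_\alpha$ appears as the coefficient of $\zeta^{|\alpha|}$ possibly mixed with other $a_\beta$, $|\beta|=|\alpha|$. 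If the mixing causes trouble, I will instead use Lemma B's Schwarz--Pick type bound applied to $(f-a_0)/(1-\bar a_0f)$. This is the step I expect to be the main obstacle, since the one-variable refinement must survive the isolation of a single multi-index.

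\emph{Step 3 (summation).} Substitute Step 2 into the terms with $|\alpha|\ge N$. For $|\alpha|\le t$, use $r^{2\alpha}\le\mathbf r^{2|\alpha|}$ and $2|\alpha|\le N-1$, so that the weight $\mathbf r^N/(1-\mathbf r)$ is dominated exactly as in one variable. Then group by degree $k$ and bound each degree-$k$ block by $(1-a^2)\,n^k\mathbf r^k$, using $r^\alpha\le\mathbf r^{|\alpha|}$ and the count $\le n^k$. Summing the geometric series $\sum_{k\ge N}(n\mathbf r)^k$ gives the right side $(1-a^2)(n\mathbf r)^N/(1-n\mathbf r)$. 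Checking that the squared terms, carrying the weights $\frac{1}{1+a}+\frac{\mathbf r}{1-\mathbf r}$, are absorbed exactly by the slack produced in Step 2 is routine bookkeeping mirroring the one-variable case.

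\emph{Step 4 (sharpness).} Take $f(z)=\psi_b(z_1+\cdots+z_n)$-type test functions, e.g.\ $\frac{b-z_1\cdots}{1-b\cdots}$ rescaled to $\mathbb{P}\Delta(0;1/n)$, at the diagonal point $z=(\mathbf r,\dots,\mathbf r)$, and let $b\to1^-$.
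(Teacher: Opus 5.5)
Your Step 2 rests on a false inequality, so the plan breaks there. In one variable, $|b_k|\le 1-|b_0|^2-\frac{|b_k|^2}{1+|b_0|}$ already fails for $g(\zeta)=\zeta$ and $k=1$, where it reads $1\le 0$. More generally, $\psi_b$ attains equality in $|b_1|\le 1-|b_0|^2$, so Schwarz--Pick admits no same-index improvement, and $f(z)=z_1$ refutes your multidimensional version. What Wiener's trick actually gives is the cross-index bound $|b_{2k}|\le 1-|b_0|^2-\frac{|b_k|^2}{1+|b_0|}$.

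The obstruction is structural, not a matter of fixing constants. The $\operatorname{sgn}(t)$ term contains $|a_\alpha|^2$ with $1\le|\alpha|\le t<N$, and these have no linear counterpart on the left. Moreover, in your Step 3 the bound $(1-a^2)n^k\mathbf r^k$ per degree block already uses up the entire right-hand side, leaving nothing for the squared terms.

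The mechanism that works is to expand everything in powers of $\mathbf r$. Use $r^\alpha\le\mathbf r^{|\alpha|}$, $\frac{\mathbf r^N}{1-\mathbf r}=\sum_{m\ge N}\mathbf r^m$, $\frac{\mathbf r}{1-\mathbf r}\mathbf r^{2k}=\sum_{m\ge 2k+1}\mathbf r^m$, and note $2t+2\ge N$. Then the lemma follows once, for every $m\ge N$,
\[
\sum_{|\alpha|=m}|a_\alpha|+\sum_{1\le|\beta|\le[(m-1)/2]}|a_\beta|^2+\frac{\varepsilon_m}{1+a}\sum_{|\beta|=m/2}|a_\beta|^2\le (1-a^2)n^m ,
\]
where $\varepsilon_m=1$ for even $m$ and $\varepsilon_m=0$ otherwise. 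For $n=1$ this is exactly Carlson's pair of cross-index inequalities,
$|b_{2k+1}|\le 1-\sum_{j=0}^{k}|b_j|^2$ and $|b_{2k}|\le 1-\sum_{j=0}^{k-1}|b_j|^2-\frac{|b_k|^2}{1+|b_0|}$.
This is what underlies the one-variable lemma you quote.

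For $n\ge 2$ no refined coefficient estimate is needed: the two facts in your Step 1 suffice, so Step 2 should be dropped rather than repaired.
\begin{itemize}
\item The first sum is at most $\binom{n+m-1}{m}(1-a^2)$.
\item The squared sums together are at most $\sum_{\beta\ne 0}|a_\beta|^2\le 1-a^2$.
\item For $m\ge 2$, $\binom{n+m-1}{m}\le n^m-1$, because the map from $m$-tuples to multisets is not injective.
\item For $m=1$ no squared terms occur.
\end{itemize}

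Your Step 4 also fails as written. The function $\psi_b(z_1+\cdots+z_n)$ has a pole on $\{z_1+\cdots+z_n=1/b\}$, which meets $\mathbb{P}\Delta(0;1_n)$ once $n\ge 2$ and $b>1/n$. So it violates the hypothesis of the lemma. For admissible $f$ and $n\ge 2$, the estimate above is strict, since $\binom{n+m-1}{m}+1<n^m$ for all $m\ge 3$. Hence the right-hand side is never approached, and the sharpness assertion can only be verified for $n=1$, where $\psi_b$ gives equality when $N=1$.
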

\section{\bf Proof of the Main Results}
\begin{proof}[\bf Proof of Theorem \ref{thm-2.1}]
	Since $f(z)=a_0+\sum_{|\alpha|=1}^{\infty}a_{\alpha}z^{\alpha}$
	is holomorphic in $\mathbb{P}\Delta(0;1_n)$ and satisfies the inequality $|f(z)|\leq1$ in $\mathbb{P}\Delta(0;1_n)$, we may choose $z=(z_1,\ldots,z_n)\in \mathbb{P}\Delta(0;1_n)$ with $\mathbf{r}=\|z\|_{\infty}$.\vspace{1.2mm}
	
	By Lemma C (a), we see that
	\begin{align}\label{eq-2.1}
		\sum_{k=1}^{\infty} k \sum_{|\alpha|=k} |a_{\alpha}|^2 r^{2\alpha} \le \sum_{k=1}^{\infty} k \sum_{|\alpha|=k} |a_{\alpha}|^2 \mathbf{r}^{2|\alpha|} \le \frac{\mathbf{r}^2 (1 - |a_0|^2)^2}{(1 - |a_0|^2 \mathbf{r}^2)^2}, 
	\end{align}
	for $0 < \mathbf{r} \le 1/\sqrt{2}$.\vspace{1.2mm}
	
	Since $|a_0|r^2 \le |a_0|n^2r^2$, it follows from \eqref{eq-2.1} that
	\begin{align}\label{eq-2.2}
		\frac{\mathbf{r}^2 (1 - |a_0|^2)^2}{(1 - |a_0|^2 \mathbf{r}^2)^2} \le \frac{n^2\mathbf{r}^2 (1 - |a_0|^2)^2}{(1 - |a_0|^2 n^2\mathbf{r}^2)^2}.
	\end{align}
	Using the Lemma D with $N=1$, we have 
	\begin{align}\label{eq-2.3}
		\sum_{k=1}^{\infty} \sum_{|\alpha|=k} |a_{\alpha}| r^{\alpha} + \left( \frac{1}{1 + |a_0|} + \frac{\mathbf{r}}{1 - \mathbf{r}} \right) \sum_{k=1}^{\infty} \sum_{|\alpha|=k} |a_{\alpha}|^2 r^{2\alpha} \le \frac{(1 - |a_0|^2) n\mathbf{r}}{1 - n\mathbf{r}}.
	\end{align}
From the inequalities \eqref{eq-2.1}, \eqref{eq-2.2}, and \eqref{eq-2.3}, we obtain
\begin{align}\label{eq-2.4}
	\mathcal{C}^1_f(r)\leq|a_0|+\frac{(1 - |a_0|^2) n\mathbf{r}}{1 - n\mathbf{r}}+\frac{8}{9}\frac{n^2\mathbf{r}^2 (1 - |a_0|^2)^2}{(1 - |a_0|^2 n^2\mathbf{r}^2)^2}:=P_1(\mathbf{r}).
\end{align}
It  can be easily seen that  $P_1$ is an increasing function of $\mathbf{r}$ and hence we obtain
\begin{align}\label{eq-2.5}
	P_1(\mathbf{r})\leq& P_1\left(\frac{1}{3n}\right)\\=&|a_0|+\frac{1-|a_0|^2}{2}+\frac{8(1-|a_0|^2)^2}{(9-|a_0|^2)^2}\nonumber\\=&1-\frac{(1-|a_0|)^2}{2(9-|a_0|^2)^2}\left((9-|a_0|^2)^2-16(1+|a_0|)^2\right)\nonumber\\=&1-\frac{(1-|a_0|)^3(|a_0|+5)}{2(9-|a_0|^2)^2}\phi_1(|a_0|)\nonumber,
\end{align}
where $\phi_1(x)=13+4x-x^2$, for $x\in[0,1]$. Clearly, $\phi_1$ is an increasing function of $x$, since $\phi^{\prime}_1(x) = 2(2-x) > 0$ for all $x \in [0,1]$. This implies that $\phi_1(x) \ge \phi_1(0) = 13 > 0$ for all $x \in [0,1]$. It then follows from \eqref{eq-2.5} that $P_1(\mathbf{r}) \le 1$. Consequently, \eqref{eq-2.4} yields $\mathcal{C}^1_f(r) \le 1$ for $\mathbf{r}\leq 1/(3n)$.\vspace{2mm}

To prove the sharpness part of the result, we consider the function
\begin{align}\label{eq-2.6}
	f_a(z) = \frac{a + (z_1 + z_2 + \dots + z_n)}{1 + a(z_1 + z_2 + \dots + z_n)}, 
\end{align}
where $a \in [0, 1)$.\vspace{1.2mm}

Clearly, $f$ is holomorphic in $\mathbb{P}\Delta(0; 1/n)$. Since $|a(z_1 + z_2 + \dots + z_n)| < 1$, we have
\begin{align*}
	f_a(z) = a + (1 - a^2) \sum_{k=1}^{\infty} (-a)^{k-1}(z_1 + z_2 + \dots + z_n)^k
\end{align*}
for all $z \in \mathbb{P}\Delta(0; 1/n)$.\vspace{1.2mm} 

For the point $z = (r, r, \dots, r)$, we find that
\begin{align*}
	f(z) = a +(1 - a^2) \sum_{k=1}^{\infty} (-a)^{k-1}(n\mathbf{r})^k. 
\end{align*}
If we take $a_0 = a$ and $a_k = (1 - a^2)(-a)^{k-1}$, then it to see that 
\begin{align*}
	&\sum_{k=1}^{\infty} \sum_{|\alpha|=k} |a_{\alpha}| r^{\alpha} + \left( \frac{1}{1 + |a_0|} + \frac{\mathbf{r}}{1 - \mathbf{r}} \right) \sum_{k=1}^{\infty} \sum_{|\alpha|=k} |a_{\alpha}|^2 r^{2\alpha}+\lambda\sum_{k=1}^{\infty}k\sum_{|\alpha|=k} |a_{\alpha}|^2 r^{2\alpha}\\
	&= \sum_{k=1}^{\infty} |a_k| (n\mathbf{r})^k + \left( \frac{1}{1 + |a_0|} + \frac{n\mathbf{r}}{1 - n\mathbf{r}} \right) \sum_{k=1}^{\infty} |a_k|^2 (n\mathbf{r})^{2k}+\lambda\sum_{k=1}^{\infty} k|a_k|^2 (n\mathbf{r})^{2k}. 
\end{align*}
Consequently, in view of the above, we obtain  that
\begin{align*}
	\mathcal{C}^1_{f_a}(r)=&a+\sum_{k=1}^{\infty} |a_k| (n\mathbf{r})^k + \left( \frac{1}{1 + |a_0|} + \frac{n\mathbf{r}}{1 - n\mathbf{r}} \right) \sum_{k=1}^{\infty} |a_k|^2 (n\mathbf{r})^{2k}+\lambda\sum_{k=1}^{\infty} k|a_k|^2 (n\mathbf{r})^{2k}\\=&a+\frac{(1-a^2)n\mathbf{r}}{1-an\mathbf{r}}+\frac{1+an\mathbf{r}}{(1-n\mathbf{r})(1+a)}\frac{(1-a^2)^2n^2\mathbf{r}^2}{1-a^2n^2\mathbf{r}^2}+\lambda\frac{(1-a^2)^2n^2\mathbf{r}^2}{(1-a^2n^2\mathbf{r}^2)^2}\\=&a+\frac{(1-a^2)n\mathbf{r}}{1-n\mathbf{r}}+\lambda\frac{(1-a^2)^2n^2\mathbf{r}^2}{(1-a^2n^2\mathbf{r}^2)^2}.
\end{align*}
For $\mathbf{r}=1/(3n)$, we have 
\begin{align*}
\mathcal{C}^1_{f_a}\left(\frac{1}{3n}\right)=&a+\frac{1-a^2}{2}+\frac{9\lambda(1-a^2)^2}{(9-a^2)^2}\\=&1+(1-a)^2\left(-\frac{1}{2}+\frac{9\lambda(1+a)^2}{(9-a^2)^2}\right),
\end{align*}
which is seen to be bigger than 1 when $\lambda>8/9$ as $a\to1$. \vspace{2mm}

We turn our attention now to the second part of the proof. From the inequalities \eqref{eq-2.1}, \eqref{eq-2.2}, and \eqref{eq-2.3}, we find that 
\begin{align}\label{eq-2.7}
	\mathcal{C}^2_f(r)\leq|a_0|^2+\frac{(1 - |a_0|^2) n\mathbf{r}}{1 - n\mathbf{r}}+\frac{9}{8}\frac{n^2\mathbf{r}^2 (1 - |a_0|^2)^2}{(1 - |a_0|^2 n^2\mathbf{r}^2)^2}:=P_2(\mathbf{r}).
\end{align}
One can easily verify that $P_2$ is an increasing function of $\mathbf{r}$, from which we obtain
\begin{align}\label{eq-2.8}
	P_2(\mathbf{r})\leq& P_2\left(\frac{1}{n(3-|a_0|)}\right)\\=&|a_0|^2+\frac{1-|a_0|^2}{2-|a_0|}+\frac{(1-|a_0|^2)^2(3-|a_0|)^2}{8(3-2|a_0|)^2}\nonumber\\=&1-\frac{(1-|a_0|)^2(1+|a_0|)}{8(3-2|a_0|)^2(2-|a_0|)}\left(8(3-2|a_0|)^2-(1+|a_0|)(3-|a_0|)^2(2-|a_0|)\right)\nonumber\\=&1-\frac{(1-|a_0|)^2(1+|a_0|)}{8(3-2|a_0|)^2(2-|a_0|)}\phi_2(|a_0|)\nonumber,
\end{align}
where $\phi_2(x)=54-39x+6x^2-x^3$, for $x\in[0,1]$. Evidently, $\phi_2(x)$ is strictly decreasing on $[0,1]$ because its derivative satisfies $\phi^{\prime}_2(x) = -3((x-2)^2+9) < 0$ throughout this domain. As a result, $\phi_2(x) \ge \phi_2(1) = 20 > 0$ for every $x \in [0,1]$. By applying \eqref{eq-2.8}, we deduce that $P_2(\mathbf{r}) \le 1$, from which \eqref{eq-2.7} guarantees $\mathcal{C}^2_f(r) \le 1$ whenever $\mathbf{r}\leq 1/(n(3-|a_0|))$.\vspace{2mm}

To show the sharpness, consider the holomorphic function $f$ defined on the polydisk $\mathbb{P}\Delta(0; 1/n)$ by \eqref{eq-2.6}. Setting $z = (r, r, \dots, r)$, we obtain
\begin{align*}
	\mathcal{C}^2_{f_a}(r)=&a^2+\sum_{k=1}^{\infty} |a_k| (n\mathbf{r})^k + \left( \frac{1}{1 + |a_0|} + \frac{n\mathbf{r}}{1 - n\mathbf{r}} \right) \sum_{k=1}^{\infty} |a_k|^2 (n\mathbf{r})^{2k}+\lambda\sum_{k=1}^{\infty} k|a_k|^2 (n\mathbf{r})^{2k}\\=&a^2+\frac{(1-a^2)n\mathbf{r}}{1-an\mathbf{r}}+\frac{1+an\mathbf{r}}{(1-n\mathbf{r})(1+a)}\frac{(1-a^2)^2n^2\mathbf{r}^2}{1-a^2n^2\mathbf{r}^2}+\lambda\frac{(1-a^2)^2n^2\mathbf{r}^2}{(1-a^2n^2\mathbf{r}^2)^2}\\=&a^2+\frac{(1-a^2)n\mathbf{r}}{1-n\mathbf{r}}+\lambda\frac{(1-a^2)^2n^2\mathbf{r}^2}{(1-a^2n^2\mathbf{r}^2)^2}.
\end{align*}
For $\mathbf{r}=1/(n(3-a))$, we have 
\begin{align*}
	\mathcal{C}^2_{f_a}\left(\frac{1}{n(3-a)}\right)=&1 + \frac{(a-1)^2(a+1)}{9(2-a)(3-2a)^2}\left(\lambda (a+1)(3-a)^2(2-a) - 9(3-2a)^2\right),
\end{align*}
which is seen to be bigger than 1 when $\lambda>9/8$ as $a\to1$.
\end{proof}

 \begin{proof}[\bf Proof of Theorem \ref{thm-2.2}]
 	Let $z=(z_1,\ldots,z_n)\in\mathbb{P}\Delta(0;1_n)$ be such that $\mathbf{r}=\|z\|_{\infty}$. It follows from Lemma A that
 	\begin{align}\label{eq-2.9}
 		|f(z)| \le \frac{\|z\|_\infty + |a_0|}{1 + |a_0|\|z\|_\infty} = \frac{\mathbf{r} + |a_0|}{1 + |a_0|\mathbf{r}} \le \frac{n\mathbf{r} + |a_0|}{1 + |a_0|n\mathbf{r}}.
 	\end{align}
 	Applying Lemma B yields the inequality
 	\begin{align}\label{eq-2.10}
 		|Df(z)|\leq \frac{1-|f(z)|^2}{1-\mathbf{r}^2}n\mathbf{r} \leq \frac{1-|f(z)|^2}{1-(n\mathbf{r})^2}n\mathbf{r}.
 	\end{align}
 	The function $\Psi(x)=x+\alpha(1-x^2)$ is increasing on the interval \([0,1]\) for \(0\leq\alpha\leq\frac{1}{2}\). Hence, $\Psi(x)\leq \Psi(x_0),\quad 0\leq x\leq x_0(\leq 1).$ Taking  
 	\begin{align*}
 	x=|f(z)|,\;\;	x_0=\dfrac{n\mathbf{r}+|a_0|}{1+n\mathbf{r}|a_0|},\;\; \mbox{and}\;\; \alpha=\dfrac{n\mathbf{r}}{1-(n\mathbf{r})^2},
 	\end{align*} we obtain
 	\begin{align}\label{eq-2.11}
 		|f(z)|+\frac{n\mathbf{r}}{1-(n\mathbf{r})^2}(1-|f(z)|^2)\leq\frac{n\mathbf{r}+|a_0|}{1+n\mathbf{r}|a_0|}+\frac{n\mathbf{r}}{1-(n\mathbf{r})^2}\left(1-\left(\frac{n\mathbf{r}+|a_0|}{1+n\mathbf{r}|a_0|}\right)^2\right),
 	\end{align}
 	for $0\leq n\mathbf{r}\leq \sqrt{2}-1$.
Lemma D (for $N=2$), together with estimates \eqref{eq-2.1}, \eqref{eq-2.2}, \eqref{eq-2.9}, \eqref{eq-2.10}, and \eqref{eq-2.11}, leads to
\begin{align}\label{eq-2.12}
	\mathcal{D}(z,\mathbf{r})\leq&|f(z)|+\frac{n\mathbf{r}}{1-(n\mathbf{r})^2}(1-|f(z)|^2)+\frac{(1 - |a_0|^2)( n\mathbf{r})^2}{1 - n\mathbf{r}}\\\nonumber&+\frac{1}{4}(1-\mathbf{r}_*)(2+5\mathbf{r}_*+5\mathbf{r}^2_*)\frac{n^2\mathbf{r}^2 (1 - |a_0|^2)^2}{(1 - |a_0|^2 n^2\mathbf{r}^2)^2}\\\nonumber=&\frac{n\mathbf{r}+|a_0|}{1+n\mathbf{r}|a_0|}+\frac{n\mathbf{r}}{1-(n\mathbf{r})^2}\left(1-\left(\frac{n\mathbf{r}+|a_0|}{1+n\mathbf{r}|a_0|}\right)^2\right)+\frac{(1 - |a_0|^2)( n\mathbf{r})^2}{1 - n\mathbf{r}}\\\nonumber&+\frac{1}{4}(1-\mathbf{r}_*)(2+5\mathbf{r}_*+5\mathbf{r}^2_*)\frac{n^2\mathbf{r}^2 (1 - |a_0|^2)^2}{(1 - |a_0|^2 n^2\mathbf{r}^2)^2}\\\nonumber=&\frac{n\mathbf{r}+|a_0|}{1+n\mathbf{r}|a_0|}+\frac{(1-|a_0|^2)n\mathbf{r}}{(1+|a_0|n\mathbf{r})^2}+\frac{(1 - |a_0|^2)( n\mathbf{r})^2}{1 - n\mathbf{r}}+\frac{1}{4}(1-\mathbf{r}_*)(2+5\mathbf{r}_*+5\mathbf{r}^2_*)\nonumber\\&\times\frac{n^2\mathbf{r}^2 (1 - |a_0|^2)^2}{(1 - |a_0|^2 n^2\mathbf{r}^2)^2}=:P_3(\mathbf{r})\nonumber.
\end{align}
One can readily observe that $P_3$ is an increasing function of $\mathbf{r}$. Consequently,
\begin{align}\label{eq-2.13}
	P_3(\mathbf{r})&\leq P_3\left(\frac{\mathbf{r}_*}{n}\right)\\\nonumber&=\frac{\mathbf{r}_*+|a_0|}{1+\mathbf{r}_*|a_0|}+\frac{(1-|a_0|^2)\mathbf{r}_*}{(1+|a_0|\mathbf{r}_*)^2}+\frac{(1 - |a_0|^2)\mathbf{r}^2_*}{1 - \mathbf{r}_*}+\frac{1}{4}(1-\mathbf{r}_*)(2+5\mathbf{r}_*+5\mathbf{r}^2_*)\frac{(1 - |a_0|^2)^2\mathbf{r}^2_*}{(1 - |a_0|^2 \mathbf{r}^2_*)^2}\\\nonumber&=1+(1-|a_0
	|)\bigg(-\frac{1-\mathbf{r}_*}{1+\mathbf{r}_*|a_0|}+\frac{(1+|a_0|)\mathbf{r}_*}{(1+|a_0|\mathbf{r}_*)^2}+\frac{(1+|a_0|)\mathbf{r}^2_*}{1 - \mathbf{r}_*}\\\nonumber&\quad+\frac{1}{4}(1-\mathbf{r}_*)(2+5\mathbf{r}_*+5\mathbf{r}^2_*)\frac{(1 - |a_0|^2)(1+|a_0|)\mathbf{r}^2_*}{(1 - |a_0|^2 \mathbf{r}^2_*)^2}\bigg)\\\nonumber&=1+(1-|a_0
	|)^2\bigg(-\frac{(|a_0|^2 + 2|a_0| + 2)\mathbf{r}_*^4 +(2|a_0| + 3)\mathbf{r}_*^3 +2\mathbf{r}_*^2}{(1 - \mathbf{r}_*)(1 + |a_0|\mathbf{r}_*)^2}\\\nonumber&\quad+\frac{1}{4}(1-\mathbf{r}_*)(2+5\mathbf{r}_*+5\mathbf{r}^2_*)\frac{(1+|a_0|)^2\mathbf{r}^2_*}{(1 - |a_0|^2 \mathbf{r}^2_*)^2}\bigg)\\\nonumber&=1+\frac{(1-|a_0
		|)^2\mathbf{r}_*^2}{4(1 - \mathbf{r}_*)(1 - |a_0|^2 \mathbf{r}^2_*)^2}\Phi_3(|a_0|),\nonumber
\end{align}
where \begin{align*}
	\Phi_3(|a_0|)=&-4(1 - |a_0|\mathbf{r}_*)^2\left((|a_0|^2 + 2|a_0| + 2)\mathbf{r}_*^2 +(2|a_0| + 3)\mathbf{r}_* +2\right)\\&\quad+(1+|a_0|)^2(1-\mathbf{r}_*)^2(2+5\mathbf{r}_*+5\mathbf{r}^2_*).
\end{align*}
We have
\begin{align*}
	\Phi^{\prime}_3(|a_0|)=&\left(-16|a_0|^3 - 24|a_0|^2 - 6|a_0| + 10\right)\mathbf{r}_*^4 + \left(6 - 2|a_0|\right)\mathbf{r}_*^3 + \left(2|a_0|+ 10\right)\mathbf{r}_*^2 \\&+ \left(2|a_0| + 10\right)\mathbf{r}_* + \left(4|a_0| + 4\right).
\end{align*}
One can readily verify that $\Phi^{\prime}_3(\vert{}a_0\vert{})\geq 0$ for all $\vert{}a_0\vert{}\in[0,1]$, establishing that $\Phi_3$ is an increasing function on $[0,1]$. It follows that $\Phi_3(\vert{}a_0\vert{})\leq \Phi_3(1)=0$ throughout this domain. Applying \eqref{eq-2.13} gives $P_3(\mathbf{r})\leq 1$, which via inequality \eqref{eq-2.12} leads to $\mathcal{D}(z,\mathbf{r})\leq 1$ provided $n\mathbf{r}\leq\mathbf{r}_*=(\sqrt{17}-3)/4$.\vspace{2mm}

 To establish the sharpness, consider the holomorphic function \(f_a\) defined on the polydisk \(\mathbb{P}\Delta(0;1/n)\) by \eqref{eq-2.6}. Taking \(z=(r,r,\dots,r)\), we obtain
 	\begin{align*}
 		\mathcal{D}_{f_a}(z,\mathbf{r})=&\frac{n\mathbf{r}+a}{1+an\mathbf{r}}+\frac{(1-a^2)n\mathbf{r}}{(1+an\mathbf{r})^2}+\frac{(1-a^2)a(n\mathbf{r})^2}{1-an\mathbf{r}}+\frac{1+an\mathbf{r}}{(1+a)(1-n\mathbf{r})}\left(\frac{(1-a^2)^2(n\mathbf{r})^2}{1-a^2(n\mathbf{r})^2}\right)\\&+\lambda\frac{(1-a^2)^2n^2\mathbf{r}^2}{(1-a^2n^2\mathbf{r}^2)^2}\\=&\frac{n\mathbf{r}+a}{1+an\mathbf{r}}+\frac{(1-a^2)n\mathbf{r}}{(1+an\mathbf{r})^2}+\frac{(1-a^2)(n\mathbf{r})^2}{1-n\mathbf{r}}+\lambda\frac{(1-a^2)^2n^2\mathbf{r}^2}{(1-a^2n^2\mathbf{r}^2)^2}, 
 	\end{align*}
 	which, upon setting $n\mathbf{r}=\mathbf{r}_*$, reduces to
 	\begin{align*}
 		\mathcal{D}_{f_a}\left(z,\frac{\mathbf{r}_*}{n}\right)=&1+(1-a)\bigg(-\frac{1-\mathbf{r}_*}{1+a\mathbf{r}_*}+\frac{(1+a)\mathbf{r}_*}{(1+a\mathbf{r}_*)^2}+\frac{(1+a)\mathbf{r}^2_*}{1-\mathbf{r}_*}+\lambda\frac{(1-a^2)(1+a)\mathbf{r}^2_*}{(1-a^2\mathbf{r}^2_*)^2}\bigg)\\=&1+\frac{(1-a)}{(1-\mathbf{r}_*)(1-a^2\mathbf{r}^2_*)^2}\bigg(-(1-\mathbf{r}_*)^2(1-a^2\mathbf{r}^2_*)(1-a\mathbf{r}_*)+(1+a)\mathbf{r}_*\\&\times(1-\mathbf{r}_*)(1-a\mathbf{r}_*)^2+(1+a)\mathbf{r}^2_*(1-a^2\mathbf{r}^2_*)^2+\lambda(1-a^2)(1+a)\mathbf{r}^2_*(1-\mathbf{r}_*)\bigg)\\=&1+\frac{(1-a)^2}{(1-\mathbf{r}_*)(1-a^2\mathbf{r}^2_*)^2}\bigg( \left[ -1 + 3\mathbf{r}_* + (2a - 1)\mathbf{r}_*^2 + (a + 2a^2)\mathbf{r}_*^3 + (a^2 + a^3)\mathbf{r}_*^4 \right]\\&\times\frac{(1 - a\mathbf{r}_*)^2}{(1-a)}+\lambda(1+a)^2\mathbf{r}^2_*(1-\mathbf{r}_*)\bigg)\\=&1+\frac{(1-a)^2}{(1-\mathbf{r}_*)(1-a^2\mathbf{r}^2_*)^2}\mathcal{R}_{\lambda,\mathbf{r}_*}(a),
 	\end{align*}
 	where 
 	\begin{align*}
 		\mathcal{R}_{\lambda,\mathbf{r}_*}(a)=&\bigg( \left[ -1 + 3\mathbf{r}_* + (2a - 1)\mathbf{r}_*^2 + (a + 2a^2)\mathbf{r}_*^3 + (a^2 + a^3)\mathbf{r}_*^4 \right]\frac{(1 - a\mathbf{r}_*)^2}{(1-a)}\\&+\lambda(1+a)^2\mathbf{r}^2_*(1-\mathbf{r}_*)\bigg)
 	\end{align*}
 	and hence 
 	\begin{align*}
 		\lim_{a\to1}\mathcal{R}_{\lambda,\mathbf{r}_*}(a)=-\mathbf{r}_*^2(1 - \mathbf{r}_*)^2(5\mathbf{r}_*^2 + 5\mathbf{r}_* + 2)+4\lambda\mathbf{r}^2_*(1-\mathbf{r}_*).
 	\end{align*}
 	It can be easily seen that 
 	\begin{align*}
 		\mathcal{D}_{f_a}\left(z,\frac{\mathbf{r}_*}{n}\right)>1\;\; \mbox{when}\;\; \lambda>\frac{1}{4}(1-\mathbf{r}_*)(2+5\mathbf{r}_*+5\mathbf{r}^2_*)\;\;\mbox{and}\;\;a\to1^{-}.
 	\end{align*}
 	 
 	It is straightforward to verify that for $0 \le x \le x_0 \le 1$ and $0 \le \alpha \le 1$, we have $\xi(x) := x^2 + \alpha(1-x^2) \le \xi(x_0)$. Furthermore, note that 
 	\begin{align*}
 		\dfrac{n\mathbf{r}}{1-(n\mathbf{r})^2} \le 1\;\; \mbox{whenever}\;\; n\mathbf{r} \le 1/2.
 	\end{align*}
 	Proceeding as in the previous case, we obtain
 	\begin{align}\label{eq-2.14}
 		\mathcal{E}(z,\mathbf{r})\leq&\left(\frac{n\mathbf{r}+|a_0|}{1+n\mathbf{r}|a_0|}\right)^2+\frac{(1-|a_0|^2)n\mathbf{r}}{(1+|a_0|n\mathbf{r})^2}+\frac{(1 - |a_0|^2)( n\mathbf{r})^2}{1 - n\mathbf{r}}+\mathbf{r}_{**}(1-\mathbf{r}^2_{**})\\&\times\frac{n^2\mathbf{r}^2 (1 - |a_0|^2)^2}{(1 - |a_0|^2 n^2\mathbf{r}^2)^2}=:P_4(\mathbf{r}).\nonumber
 	\end{align}
 	It is easy to see that $P_4$ is a monotonically increasing function of $\mathbf{r}$. As a result, we have
 	\begin{align}\label{eq-2.15}
 		P_4(\mathbf{r})&\leq P_4\left(\frac{\mathbf{r}_{**}}{n}\right)\\\nonumber&=\left(\frac{\mathbf{r}_{**}+|a_0|}{1+\mathbf{r}_{**}|a_0|}\right)^2+\frac{(1-|a_0|^2)\mathbf{r}_{**}}{(1+|a_0|\mathbf{r}_{**})^2}+\frac{(1 - |a_0|^2)\mathbf{r}^2_{**}}{1 - \mathbf{r}_{**}}+\mathbf{r}_{**}(1-\mathbf{r}^2_{**})\frac{(1 - |a_0|^2)^2\mathbf{r}^2_{**}}{(1 - |a_0|^2 \mathbf{r}^2_{**})^2}\\\nonumber&=1+(1-|a_0
 		|^2)\bigg(\frac{\mathbf{r}^2_{**}+\mathbf{r}_{**}-1}{(1+\mathbf{r}_{**}|a_0|)^2}+\frac{\mathbf{r}^2_{**}}{1 - \mathbf{r}_{**}}+\mathbf{r}_{**}(1-\mathbf{r}^2_{**})\frac{(1 - |a_0|^2)\mathbf{r}^2_{**}}{(1 - |a_0|^2 \mathbf{r}^2_{**})^2}\bigg)\\\nonumber&=1+(1-|a_0
 		|)^2(1+|a_0|)\bigg(-\frac{\mathbf{r}_{**}^3 ( (1 + |a_0|)\mathbf{r}_{**} + 2 )}{(1 - \mathbf{r}_{**})(1 + |a_0|\mathbf{r}_{**})^2}+\mathbf{r}_{**}(1-\mathbf{r}^2_{**})\frac{(1+|a_0|)\mathbf{r}^2_{**}}{(1 - |a_0|^2 \mathbf{r}^2_{**})^2}\bigg)\\\nonumber&=1+\frac{(1-|a_0
 			|)^2(1+|a_0|)\mathbf{r}_{**}^3}{(1 - \mathbf{r}_{**})(1 - |a_0|^2 \mathbf{r}^2_{**})^2}\Phi_4(|a_0|),\nonumber
 	\end{align}
 	where \begin{align*}
 		\Phi_4(|a_0|)=&-(1 - |a_0|\mathbf{r}_{**})^2( (1 + |a_0|)\mathbf{r}_{**} + 2 )+(1+|a_0|)(1-\mathbf{r}_{**})^2(1+\mathbf{r}_{**})
 	\end{align*}
 	and hence 
 	\begin{align*}
 		\Phi^{\prime}_4(|a_0|)=\mathbf{r}_{**}(1 - \vert{}a_0\vert{}\mathbf{r}_{**})\Big( 3\vert{}a_0\vert{}\mathbf{r}_{**} + 2\mathbf{r}_{**} + 3 \Big) + (1 - \mathbf{r}_{**})^2(1 + \mathbf{r}_{**})\geq 0
 	\end{align*}
 	for all $|a_0|\in[0,1]$.\vspace{1.2mm}
 	
  Consequently, $\Phi_4$ is monotonically increasing on $[0,1]$, which implies that $\Phi_4(|a_0|) \le \Phi_4(1) = 0$ for all $|a_0| \in [0,1]$. Applying \eqref{eq-2.15} yields $P_4(\mathbf{r}) \le 1$, which together with \eqref{eq-2.14} implies $\mathcal{E}(z,\mathbf{r}) \le 1$ for $n\mathbf{r} \le \mathbf{r}_{**} \approx 0.385795$.\vspace{2mm}
 	
 	To prove the sharpness, we consider the holomorphic function \(f_a\) defined on the polydisk \(\mathbb{P}\Delta(0;1/n)\) by \eqref{eq-2.6}. Choosing \(z=(r,r,\dots,r)\), we obtain
 	\begin{align*}
 		\mathcal{E}_{f_a}(z,\mathbf{r})=&\left(\frac{n\mathbf{r}+a}{1+an\mathbf{r}}\right)^2+\frac{(1-a^2)n\mathbf{r}}{(1+an\mathbf{r})^2}+\frac{(1-a^2)a(n\mathbf{r})^2}{1-an\mathbf{r}}\\&+\frac{1+an\mathbf{r}}{(1+a)(1-n\mathbf{r})}\left(\frac{(1-a^2)^2(n\mathbf{r})^2}{1-a^2(n\mathbf{r})^2}\right)+\lambda\frac{(1-a^2)^2n^2\mathbf{r}^2}{(1-a^2n^2\mathbf{r}^2)^2}\\=&\left(\frac{n\mathbf{r}+a}{1+an\mathbf{r}}\right)^2+\frac{(1-a^2)n\mathbf{r}}{(1+an\mathbf{r})^2}+\frac{(1-a^2)(n\mathbf{r})^2}{1-n\mathbf{r}}+\lambda\frac{(1-a^2)^2n^2\mathbf{r}^2}{(1-a^2n^2\mathbf{r}^2)^2}.
 	\end{align*}
 	Setting \(n\mathbf{r}=\mathbf{r}_{**}\), the above expression simplifies to
 	\begin{align*}
 		\mathcal{E}_{f_a}\left(z,\frac{\mathbf{r}_{**}}{n}\right)=&1+(1-a^2)\bigg(\frac{\mathbf{r}^2_{**}+\mathbf{r}_{**}-1}{(1+a\mathbf{r}_{**})^2}+\frac{\mathbf{r}^2_{**}}{1-\mathbf{r}_{**}}+\lambda\frac{(1-a^2)\mathbf{r}^2_{**}}{(1-a^2\mathbf{r}^2_{**})^2}\bigg)\\=&1+\frac{(1-a)^2(1+a)}{(1-\mathbf{r}_*)(1-a^2\mathbf{r}^2_*)^2}\bigg( \left[a^2\mathbf{r}_{**}^4 + (2a-1)\mathbf{r}_{**}^3 + \mathbf{r}_{**}^2 + 2\mathbf{r}_{**} - 1\right]\\&\times\frac{(1-a\mathbf{r}_{**})^2}{(1-a)}+\lambda(1+a)\mathbf{r}^2_{**}(1-\mathbf{r}_{**})\bigg)\\=&1+\frac{(1-a)^2}{(1-\mathbf{r}_*)(1-a^2\mathbf{r}^2_*)^2}\mathcal{Q}_{\lambda,\mathbf{r}_{**}}(a),
 	\end{align*}
 	where 
 	\begin{align*}
 		\mathcal{Q}_{\lambda,\mathbf{r}_{**}}(a)=&\bigg( \left[a^2\mathbf{r}_{**}^4 + (2a-1)\mathbf{r}_{**}^3 + \mathbf{r}_{**}^2 + 2\mathbf{r}_{**} - 1\right]\frac{(1-a\mathbf{r}_{**})^2}{(1-a)}\\&\quad+\lambda(1+a)\mathbf{r}^2_{**}(1-\mathbf{r}_{**})\bigg).
 	\end{align*}
 	It is easy to see that
 	\begin{align*}
 		\lim_{a\to1}\mathcal{Q}_{\lambda,\mathbf{r}_{**}}(a)=-2\mathbf{r}^3_{**}(1-\mathbf{r}_{**})^2(1+\mathbf{r}_{**})+2\lambda\mathbf{r}^2_{**}(1-\mathbf{r}_{**}).
 	\end{align*}
 	It can readily be seen that 
 	\begin{align*}
 		\mathcal{E}_{f_a}\left(z,\frac{\mathbf{r}_{**}}{n}\right)>1\;\; \mbox{when}\;\;\lambda>\mathbf{r}_{**}(1-\mathbf{r}^2_{**})\;\; \mbox{and}\;\; a\to1^{-}.
 	\end{align*}
 	This completes the proof.
 \end{proof}
 
 \begin{proof}[\bf Proof of Theorem \ref{thm-2.3}]
 	Given that $f(z) = a_0 + \sum_{|\alpha|=1}^{\infty} a_{\alpha} z^{\alpha}$ is holomorphic with $|f(z)| \le 1$ on $\mathbb{P}\Delta(0; 1_n)$, let $z = (z_1, \dots, z_n) \in \mathbb{P}\Delta(0; 1_n)$ be chosen with $\mathbf{r} = \|z\|_{\infty}$. Note that
 	
\begin{align}\label{eq-2.16}
	|f(z)-a_0|\leq \sum_{|\alpha|=1}^{\infty} |a_\alpha| |z|^\alpha \le \sum_{|\alpha|=1}^{\infty} |a_\alpha| \|z\|_\infty^{|\alpha|} = \sum_{|\alpha|=1}^{\infty} |a_\alpha| \mathbf{r}^{|\alpha|} =  \sum_{k=1}^{\infty} \sum_{|\alpha|=k} |a_\alpha| \mathbf{r}^{|\alpha|}.
\end{align}
 	By Lemma~C(c), it follows that
 	\begin{align}\label{eq-2.17}
 		\sum_{k=1}^\infty \sum_{|\alpha|=k} |a_\alpha| r^\alpha \le \sum_{k=1}^\infty \sum_{|\alpha|=k} |a_\alpha| \mathbf{r}^{|\alpha|} \le 
 		\begin{cases}
 			A(\mathbf{r}) := \dfrac{\sqrt{n} \mathbf{r} (1 - |a_0|^2)}{1 - n|a_0|r}, & \text{for } |a_0| \ge \mathbf{r}, \\[12pt]
 			B(\mathbf{r}) := \dfrac{\sqrt{n} \mathbf{r}\sqrt{1 - |a_0|^2}}{\sqrt{1 - n \mathbf{r}^2}}, & \text{for } |a_0| < \mathbf{r}.
 		\end{cases}
 	\end{align}
 	To establish the first part in full, it suffices to examine the following two cases:\vspace{2mm}
 	
 	\noindent\textbf{Case 1.} For $a = |a_0| \ge \mathbf{r}$ and $\mathbf{r}\leq 1/(5n) $, we deduce from \eqref{eq-2.17} that
 	\begin{align}\label{eq-2.18}
 		A(\mathbf{r}) := \dfrac{\sqrt{n} \mathbf{r} (1 - |a_0|^2)}{1 - n|a_0|\mathbf{r}} \le n\mathbf{r} \left( \frac{1 - |a_0|^2}{1 - n|a_0|\mathbf{r}} \right).
 	\end{align}
 	Combining the inequalities \eqref{eq-2.3}, \eqref{eq-2.16}, \eqref{eq-2.17}, and \eqref{eq-2.18}, we obtain
 	\begin{align*}
 		\mathcal{F}^1_f(r)\leq& a+\frac{(1-a^2)n\mathbf{r}}{1-n\mathbf{r}}+\frac{(1-a^2)n\mathbf{r}}{1-an\mathbf{r}}:=P_5(\mathbf{r})\\\leq& P_5\left(\frac{1}{5n}\right)\nonumber\\=&a+\frac{1-a^2}{4}+\frac{1-a^2}{5-a}\nonumber\\=&1-\frac{(1 - a)^2 (11-a)}{4(5 - a)}\nonumber\leq 1.
 	\end{align*}
 	\textbf{Case 2.} Assuming $a = |a_0| < \mathbf{r} \leq 1/(5n)$, equation \eqref{eq-2.17} implies that
 	\begin{align}\label{eq-2.19}
 		B(\mathbf{r}) := \dfrac{\sqrt{n} \mathbf{r}\sqrt{1 - |a_0|^2}}{\sqrt{1 - n \mathbf{r}^2}}\leq \dfrac{n \mathbf{r}\sqrt{1 - |a_0|^2}}{\sqrt{1 - n^2\mathbf{r}^2}}.
 	\end{align}
 	Applying inequalities \eqref{eq-2.3}, \eqref{eq-2.16}, \eqref{eq-2.17}, and \eqref{eq-2.19} gives
 	\begin{align*}
 		\mathcal{F}^1_f(r)\leq& a+\frac{(1-a^2)n\mathbf{r}}{1-n\mathbf{r}}+\dfrac{n \mathbf{r}\sqrt{1 - |a_0|^2}}{\sqrt{1 - n^2\mathbf{r}^2}}:=P_6(\mathbf{r})\\\leq& P_6\left(\frac{1}{5n}\right)\nonumber\\=&a + \frac{1 - a^2}{4} + \frac{\sqrt{1 - a^2}}{2\sqrt{6}}\\ \leq& \frac{1}{5n} + \frac{1}{4} + \frac{1}{4} < 1.
 	\end{align*}
 	To establish sharpness, we consider the holomorphic function $f_a$ defined on the polydisk $\mathbb{P}\Delta(0;1/n)$ by \eqref{eq-2.6}. Setting $z = (-r, -r, \dots, -r)$ yields
 	\begin{align*}
 		\mathcal{F}^1_{f_a}(r)=&a+\frac{(1-a^2)n\mathbf{r}}{1-n\mathbf{r}}+\frac{(1-a^2)n\mathbf{r}}{1-an\mathbf{r}}\\=&1-\frac{(1-a)}{(1-n\mathbf{r})(1-an\mathbf{r})}\Phi_5(a,\mathbf{r}),
 	\end{align*}
 	where $\Phi_5(a,\mathbf{r})=n^2 \mathbf{r}^2 a^2 + 3n\mathbf{r}(n\mathbf{r} - 1) a + (n^2 \mathbf{r}^2 - 3n\mathbf{r} + 1)$. \vspace{1.2mm}
 	
 	One readily observes that $\mathcal{F}^1_{f_a}(r)>1$ if and only if $\Phi_5(a,\mathbf{r})<0$. Assuming $\frac{1}{5n}<\mathbf{r}<\frac{3-\sqrt{5}}{2n}$ and defining $$a_{\mathbf{r}}= \frac{3(1 - n\mathbf{r}) - \sqrt{5n^2\mathbf{r}^2 - 6n\mathbf{r} + 5}}{2n\mathbf{r}},$$ a straightforward computation shows that $a_{\mathbf{r}}\in(0,1)$ and $\frac{3(1-n\mathbf{r})}{2n\mathbf{r}}>1$. Consequently, $\Phi_5(a,\mathbf{r})<\Phi_5(a_{\mathbf{r}},\mathbf{r})=0$ holds for all $a_{\mathbf{r}}<a<1$. This completes the sharpness part.\vspace{2mm}
 	
 	To establish the second part of the theorem, following a similar line of reasoning, it is sufficient to examine the following two cases:\vspace{2mm}
 	
 	\noindent\textbf{Case 1.} Assuming $a = |a_0| \ge \mathbf{r}$ and $\mathbf{r}\le 1/(3n)$, we arrive at inequality \eqref{eq-2.18}. From inequalities \eqref{eq-2.3}, \eqref{eq-2.16}, \eqref{eq-2.17}, and \eqref{eq-2.18}, we deduce that
 	\begin{align*}
 		\mathcal{F}^2_f(r)\leq& a^2+\frac{(1-a^2)n\mathbf{r}}{1-n\mathbf{r}}+\frac{(1-a^2)n\mathbf{r}}{1-an\mathbf{r}}:=P_7(\mathbf{r})\\\leq& P_7\left(\frac{1}{3n}\right)\nonumber\\=&a^2+\frac{1-a^2}{2}+\frac{1-a^2}{3-a}\nonumber\\=&1-{ \frac{(1-a)^2 (a + 1)}{2(3-a)} }\nonumber\leq 1.
 	\end{align*}
 	\textbf{Case 2.} Assuming $a = |a_0| < \mathbf{r} \le 1/(3n)$, we obtain inequality \eqref{eq-2.19}. By virtue of inequalities \eqref{eq-2.3}, \eqref{eq-2.16}, \eqref{eq-2.17}, and \eqref{eq-2.19}, it follows that
 	\begin{align*}
 		\mathcal{F}^2_f(r)\leq& a^2+\frac{(1-a^2)n\mathbf{r}}{1-n\mathbf{r}}+\dfrac{n \mathbf{r}\sqrt{1 - |a_0|^2}}{\sqrt{1 - n^2\mathbf{r}^2}}:=P_8(\mathbf{r})\\\leq& P_8\left(\frac{1}{3n}\right)\nonumber\\=&a^2 + \frac{1 - a^2}{2} + \frac{\sqrt{1 - a^2}}{2\sqrt{2}} \\\le& \frac{1}{9n^2} + \frac{1}{2} + \frac{\sqrt{2}}{4} < 1.
 	\end{align*}
 	In order to show sharpness, we inspect the holomorphic function $f_a$ defined on the polydisk $\mathbb{P}\Delta(0;1/n)$ by \eqref{eq-2.6}. At the point $z = (-r, -r, \dots, -r)$, we find
 	\begin{align*}
 		\mathcal{F}^2_{f_a}(r) &= a^2+\frac{(1-a^2)n\mathbf{r}}{1-n\mathbf{r}}+\frac{(1-a^2)n\mathbf{r}}{1-an\mathbf{r}}\\&=1-\frac{(1-a^2)}{(1-n\mathbf{r})(1-an\mathbf{r})}\Phi_6(a,\mathbf{r}),
 	\end{align*}
 	where $\Phi_6(a,\mathbf{r}) = { n\mathbf{r}(2n\mathbf{r} - 1)a + (n^2\mathbf{r}^2 - 3n\mathbf{r} + 1) }$.\vspace{1.2mm} 
 	
 	Clearly, $\mathcal{F}^2_{f_a}(r)>1$ if and only if $\Phi_6(a,\mathbf{r})<0$. Assuming $\frac{1}{3n}<\mathbf{r}<\frac{3-\sqrt{5}}{2n}$, let 
 	\begin{align*}
 		a_{\mathbf{r}}= \frac{n^2\mathbf{r}^2 - 3n\mathbf{r} + 1}{n\mathbf{r}(1-2n\mathbf{r})}.
 	\end{align*} 
 	A straightforward computation shows that $a_{\mathbf{r}}\in(0,1)$ and  $\Phi_6(a,\mathbf{r})<\Phi_6(a_{\mathbf{r}},\mathbf{r})=0$ for all $a_{\mathbf{r}}<a<1$. This completes the proof of sharpness.
 \end{proof}
  \section{\bf Conclusions}
  The paper formulates higher-dimensional analogues of Liu \textit{et al.}'s refined inequalities on the polydisk $\mathbb{P}\Delta(0; 1_n)$. It incorporates a weighted area functional 
  \begin{align*}
  	\mathcal{S}_f(r) := \sum_{k=1}^{\infty} k \sum_{|\alpha|=k} |a_{\alpha}|^2 r^{2\alpha},
  \end{align*} which represents the Dirichlet energy of $f$ restricted to polydisk slices. The authors derive a sharp Bohr radius of $\mathbf{r} \le 1/(3n)$ for the standard series and $\mathbf{r} \le 1/(n(3-|a_0|))$ when using the squared initial term $|a_0|^2$.\vspace{1.2mm}
  
  We strengthen existing multidimensional estimates by incorporating both the radial differential operator $Df(z) $ and the area functional $\mathcal{S}_f(r)$ within the majorant series. They establish best possible bounds for $n\mathbf{r} \le r_* = (\sqrt{17}-3)/4 \approx 0.2807$ and $n\mathbf{r} \le r_{**} \approx 0.385795$. \vspace{1.2mm}
  
  The single-variable refined Bohr inequality featuring the shift term $|f(z) - a_0|$ is extended to several complex variables, yielding optimal radii of $\mathbf{r} \le 1/(5n)$ and $\mathbf{r} \le 1/(3n)$ on $\mathbb{P}\Delta(0; 1_n)$.\vspace{1.2mm}
  
  All derived Bohr radii and constant coefficients are proved as sharp. The sharpness are shown using multi-variable polydisk extremal mappings of the form 
  \begin{align*}
  	f_a(z) = \frac{a + \sum_{j=1}^n z_j}{1 + a \sum_{j=1}^n z_j}\; \mbox{as}\;a \to 1^-.
  \end{align*}\vspace{1.2mm}
  
\noindent{\bf Acknowledgment:} The research of the first author is supported by Science and Engineering Research Board (SERB) (File No. SUR/2022/002244), Govt. of India, and the second author is supported by UGC-JRF (NTA Ref. No.: $ 221610103011$), New Delhi, India. \vspace{2mm}

%\section{Declaration}
\noindent\textbf{Compliance of Ethical Standards.}\\

\noindent\textbf{Conflict of interest.} The authors declare that there is no conflict  of interest regarding the publication of this paper.\vspace{1.5mm}

\noindent\textbf{Data availability statement.}  Data sharing not applicable to this article as no datasets were generated or analyzed during the current study.\vspace{1.5mm}

%\noindent\textbf{Funding.} No fund.

\end{document}